\documentclass[12pt,a4paper]{amsart}
\usepackage{}
\usepackage[all]{xy}
\usepackage{amsmath}
\usepackage{amsfonts}
\usepackage{amssymb}
\usepackage[all]{xy}           
\usepackage{mathrsfs}

\usepackage{bbding}
\usepackage{amscd}

\usepackage{tikz}
\usetikzlibrary{positioning}
\usetikzlibrary{calc}

\usepackage[shortlabels]{enumitem}

\usepackage{ifpdf}
\ifpdf
 \usepackage[colorlinks,final,backref=page,hyperindex]{hyperref}
\else
  \usepackage[colorlinks,final,backref=page,hyperindex,hypertex]{hyperref}
\fi

\usepackage{tikz}
\usepackage[active]{srcltx}
\usepackage[dvips]{lscape}
\xyoption{web}
\usetikzlibrary{arrows}
\usepackage{xspace,amsthm}
\usepackage{mathrsfs}
\makeatletter

\newcommand {\emptycomment}[1]{}

\theoremstyle{definition}
\numberwithin{equation}{section}

\newtheorem{theorem}{Theorem}[section]

\newtheorem{lemma}[theorem]{Lemma}

\newtheorem{definition}[theorem]{Definition}
\numberwithin{equation}{section}

\newcommand{\C}{\mathbb C}

\renewcommand{\l}{\ensuremath{\l}}

\newcommand{\h}{\ensuremath{\mathfrak{H}}}
\newcommand{\n}{\ensuremath{\mathfrak{n}}}
\newcommand{\Z}{\ensuremath{\mathbb{Z}}\xspace}

\renewcommand{\phi}{\varphi}

\renewcommand{\leq}{\leqslant}
\renewcommand{\geq}{\geqslant}

\def\h{\mathfrak{h}}

\def\sl{\mathfrak{sl}}

\def\sp{\mathfrak{sp}}
\def\so{\mathfrak{so}}

\def\UU{\mathcal{U}}

\def\wX{{\widetilde{X}}}

\date{today }

\begin{document}

\title[$\mathfrak{so}_n(\mathbb{C})$-modules which are free over an abelian nilradical]{$\mathfrak{so}_n(\mathbb{C})$-modules which are free over an abelian nilradical}

\author{Yang Chen}

\address{Y. Chen: School of Science, Jimei University, Xiamen, Fujian, 361021, P. R. China}

\email{chenyang1729@hotmail.com}

\author{Haijun Tan}

\address{H. Tan: School of Mathematics and Statistics, Northeast Normal University, Changchun, Jilin, 130024, P. R. China.}

\email{tanhj9999@163.com}

\author{Lu Zhang}

\address{L. Zhang: School of Mathematical Sciences, Hebei Normal University, Shijiazhuang, Hebei, 050024, P. R. China.}

\email{zhangluzhanghonglu@126.com}
\date{}\maketitle

\begin{abstract}
In this paper, we classify the category of $\mathfrak{so}_n(\mathbb{C})$-modules whose restrictions to the universal enveloping algebra of the abelian nilradical of a maximal parabolic subalgebra are free of rank $1$. We show that they generically are simple, and give an explicit description of submodules in the non-simple case. This category embodies a certain connection between weight modules and non-weight modules.
\end{abstract}

\vskip 10pt \noindent {\em Keywords:}  Orthogonal Lie algebra, abelian nilradical, polynomial module, category $\mathcal{O}$

\vskip 5pt
\noindent
{\em 2020  Math. Subj. Class.:} 17B10, 17B20,  17B30,  17B35

\vskip 10pt

\section{Introduction}

In Lie theory, the polynomial algebra serves as the fundamental space where Lie algebras act as vector fields. If a Lie algebra is embedded in the $n$-th Weyl algebra, then it has a polynomial module $\mathbb{C}[x_1,\cdots,x_n]$. Since the Cartan subalgebra $\mathfrak{h}$ is abelian, the universal enveloping algebra $\mathcal{U}(\mathfrak{h})$ is a polynomial algebra. In \cite{Nil1} and \cite{Nil2}, Nilsson studied the $\mathcal{U}(\mathfrak{h})$-free modules for finite-dimensional simple Lie algebras, and found that the $\mathcal{U}(\mathfrak{h})$-free modules exist only for type A and type C using the weighting functor from this category to the category of coherent families. In \cite{TZ}, Tan and Zhao determined the $\mathcal{U}(\mathfrak{h})$-free module of rank one over the Witt algebra. Subsequently, in \cite{MP} and \cite{GLLZ}, simple $\mathcal{U}(\mathfrak{h})$-free modules of arbitrary rank were constructed for $\mathfrak{sl}_2$ and the Witt algebra respectively. In \cite{HCL} and \cite{Nil3}, the authors independently constructed $\mathcal{U}(\mathfrak{n})$-free $\mathfrak{sl}_{l+1}(\mathbb{C})$-modules of rank $1$, where $\mathfrak{n}$ is the abelian nilradical of the maximal parabolic subalgebra. Specifically, in \cite{HCL} the authors construct $\mathcal{U}(\mathfrak{h})$-free $\mathfrak{sl}_{l+1}(\mathbb{C})$-modules of finite rank using $\mathcal{U}(\mathfrak{n})$-free $\mathfrak{sl}_{l+1}(\mathbb{C})$-modules. In a sense, the $\mathcal{U}(\mathfrak{n})$-free modules provide a class of polynomial modules. According to reference \cite{RRS}, the parabolic subalgebra with abelian nilradical exists only in types $A, B, C, D, E_6, E_7$. In \cite{CT26}, we study $\mathcal{U}(\mathfrak{n})$-free modules of rank $1$ in type C, where the Krull dimension of $\mathcal{U}(\mathfrak{n})$ is much greater than the rank of $\mathfrak{sp}_{2l}(\mathbb{C})$. In the present paper, we classify the categories of $\mathcal{U}(\mathfrak{n})$-free modules of rank $1$ in type B and type D. The structure of $\mathcal{U}(\mathfrak{n})$-free modules in type B is relatively simpler than that in type C. However, there exist two non-isomorphic abelian nilradicals $\mathfrak{n}_1$ and $\mathfrak{n}_l$ in type D, where $\mathcal{U}(\mathfrak{n}_1)$-free modules can be obtained from $\mathcal{U}(\mathfrak{n})$-free modules in type B. It is worth mentioning that these modules are closely related to lowest weight modules, which belong to the BGG category $\mathcal{O}$, see \cite{Hum}. By solving singular vectors, we obtain a simplicity criterion and an algorithm to produce all composition series for type B or type D, which also applies to other types. We will further explore $\mathcal{U}(\mathfrak{n})$-free modules of arbitrary rank in a subsequent paper. Throughout this paper, we denote by $\mathbb{Z}$, $\mathbb{Z}_+$, $\mathbb{N}$ and $\mathbb{C}$ the sets of all integers, positive integers, nonnegative integers and complex numbers, respectively. All algebras and vector spaces are assumed to be over $\C$.

\section{Preliminaries}

We recall some basic facts about orthogonal Lie algebras. The orthogonal Lie algebra $\mathfrak{so}_{2l+1}(\mathbb{\mathbb{C}})$ of type $B_l$ consists of all $(2l+ 1)\times (2l+ 1)$-matrices with block form
$$\left(\begin{matrix}
0 & V & U\\
-U^T & X & Y\\
-V^T & Z & -X^T
\end{matrix}\right),$$
where $X$ is a square matrix of order $n$ and $Y= -Y^T, Z= -Z^T$.
The standard Cartan subalgebra is 
$$\mathfrak{h}_B= \text{span}_{\mathbb{C}}\{X_{i,i}:= e_{i+1,i+1}- e_{i+l+1, i+l+1}| 1\leq i\leq l\},$$
where $e_{s,t}$ is the matrix with $1$ in the $s$-th row and the $t$-th column and 0 elsewhere. Let $\epsilon_1,\cdots, \epsilon_l$ be the dual basis of $X_{i,i}, 1\leq i\leq l$. Then the root system is 
$$\Delta_B= \{\pm\epsilon_i, \pm(\epsilon_i\pm \epsilon_j)| 1\leq i\neq j\leq l\}$$
and the fundamental system is
$$\Pi_B= \{\alpha_i:= \epsilon_i- \epsilon_{i+1}, \alpha_l:= \epsilon_l| 1\leq i\leq l-1\}.$$
We list root vectors with $1\leq i\neq j\leq l$ as follows
\begin{center}
\begin{tabular}{c|c}
Root & Root vector\\
\hline
$\epsilon_i$ & $U_i:= e_{1,i+l+1}- e_{i+1,1}$\\
$-\epsilon_i$ & $V_i:= e_{1,i+1}- e_{i+l+1,1}$\\
$\epsilon_i- \epsilon_j$ & $X_{i,j}:= e_{i+1,j+1}- e_{j+l+1,i+l+1}$\\
$\epsilon_i+ \epsilon_j$ & $Y_{i,j}:= e_{i+1,j+l+1}- e_{j+1,i+l+1}$\\
$-\epsilon_i- \epsilon_j$ & $Z_{i,j}:= e_{i+l+1,j+1}- e_{j+l+1,i+1}$.\\
\end{tabular}
\end{center}
The orthogonal Lie algebra $\mathfrak{so}_{2l}(\mathbb{C})$ of type $D_l$ is the subalgebra of $\mathfrak{so}_{2l+1}(\mathbb{C})$ generated by $X_{i,j}, Y_{i,j}, Z_{i,j}$. Then the root system is 
$$\Delta_D=\{ \pm ( \epsilon_i\pm \epsilon_j)| 1\leq i\neq j\leq l\},$$
and the fundamental system can be chosen as
$$\Pi_D=\{\alpha_i, \alpha'_l:=\epsilon_{l-1}+\epsilon_l|1\leq i\leq l-1\}.$$
Moreover, the Chevalley generators with respect to the chosen fundamental system are 
$$\begin{aligned}
&e_{\alpha_i}:=X_{i,i+1}, f_{\alpha_i}:=X_{i+1,i}, h_{\alpha_i}:=X_{i,i}-X_{i+1,i+1}, 1\leq i\leq l-1,\\
&e_{\alpha'_l}:=Y_{l-1,l}, f_{\alpha'_l}:=Z_{l, l-1}, h_{\alpha'_l}:=X_{l-1,l-1}+X_{l,l}.\\
\end{aligned}$$

For the orthogonal Lie algebra $\mathfrak{so}_{2l+1}(\mathbb{C})$, only the maximal parabolic subalgebra $\mathfrak{p}_{\alpha_1}$ corresponding to $\alpha_1$ has the abelian nilradical 
$$\mathfrak{n}= \text{span}_{\mathbb{C}}\{U_1, X_{1,j}, Y_{1,j}| 2\leq j\leq l\}.$$
For the orthogonal Lie algebra $\mathfrak{so}_{2l}(\mathbb{C})$, the maximal parabolic subalgebra $\mathfrak{p}_{\alpha}$ corresponding to the simple root $\alpha$ has a nonzero abelian nilradical only for $\alpha= \alpha_1, \alpha_{l-1}, \alpha'_l$. We refer the readers to \cite{Kob08} for more details. It is not difficult to see that the nilradical of $\mathfrak{p}_{\alpha_1}$ is
$$\mathfrak{n}_1= \text{span}_{\mathbb{C}}\{X_{1,j}, Y_{1,j}| 2\leq j\leq l\},$$
and 
the nilradical of $\mathfrak{p}_{\alpha'_l}$ is 
$$\mathfrak{n}_l=\text{span}_{\mathbb{C}}\{Y_{i,j}| 1\leq i< j \leq l\}\cong \mathfrak{n}_{l-1},$$
where $\mathfrak{n}_{l-1}$ is the nilradical of $\mathfrak{p}_{\alpha_{l-1}}$. The universal enveloping algebra of the abelian nilradical is isomorphic to the polynomial algebra.

\section{$\mathfrak{so}_{2l+1}(\mathbb{C})$-module structure on $\mathcal{U}(\mathfrak{n})$}

In the section, we study the category $\mathcal{M}(\mathfrak{n})$ of $\mathfrak{so}_{2l+1}(\mathbb{C})$-modules that are free of rank $1$ over $\mathcal{U}(\mathfrak{n})$. Since $\mathfrak{so}_{2l+1}(\mathbb{C})$ is generated by $U_i, V_i, i=1, \ldots, l$, we need to  determine  the actions of $U_i, V_i, i=1, \ldots, l$ on $\mathcal{U}(\mathfrak{n})=\mathbb{C}[\mathfrak{n}]$. Let
$$\widetilde{X}_{1,1}= U_1\frac{\partial}{\partial U_1}+\sum_{j=2}^l(X_{1,j}\frac{\partial}{\partial X_{1,j}}+ Y_{1,j}\frac{\partial}{\partial Y_{1,j}}),$$
$$\widetilde{X}_{j,j}= Y_{1,j}\frac{\partial}{\partial Y_{1,j}}- X_{1,j}\frac{\partial}{\partial X_{1,j}}, 2\leq j\leq l,$$
$$\widetilde{U}_i= Y_{1,i}\frac{\partial}{\partial U_1}- U_1\frac{\partial}{\partial X_{1,i}}, \widetilde{V}_i= X_{1,i}\frac{\partial}{\partial U_1}- U_1\frac{\partial}{\partial Y_{1,i}}, 2\leq i\leq l,$$
and
$$\widetilde{V}_1= \widetilde{X}_{1,1}\frac{\partial}{\partial U_1}+ \sum_{j=2}^l (\widetilde{V}_j\frac{\partial}{\partial X_{1,j}}+ \widetilde{U}_j\frac{\partial}{\partial Y_{1,j}}).$$

\begin{lemma}\label{lem31}
Suppose that $f\in \C[\n]$, then
$$\begin{cases}
X_{i,i}\cdot f=\Big(x_{i,i}+\wX_{i,i}\Big)f, &1\leq i\leq l,\\
U_i\cdot f= (u_i+ \widetilde{U}_i)f, V_i\cdot f= (v_i+ \widetilde{V}_i)f, &2\leq i\leq l,\\
V_1\cdot f= (v_1+ x_{1,1}\frac{\partial}{\partial U_1}+ \sum\limits_{j=2}^l (v_j\frac{\partial}{\partial X_{1,j}}+ u_j\frac{\partial}{\partial Y_{1,j}})+ \frac{1}{2}\widetilde{V}_1)f,
\end{cases}$$ 
where $x_{i,i}= X_{i,i}\cdot 1$, $u_i= U_i\cdot 1$ and $v_i= V_i\cdot 1$.
\end{lemma}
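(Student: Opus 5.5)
The plan is to use the rank-one freeness to identify the module with $\C[\n]$ via $f\mapsto f\cdot 1$, and then compute the action of each generator $y\in\{X_{i,i},U_i,V_i\}$ by commuting $y$ past $f$ in $\mathcal{U}(\mathfrak{so}_{2l+1}(\C))$. The basic identity is
$$y\cdot(af)=a\,(y\cdot f)+[y,a]\cdot f,\qquad a\in\n,\ f\in\C[\n].$$
Since $\n$ is abelian, the proof reduces to an induction on the degree of $f$. For each generator I will exhibit a candidate operator $D_y$ on $\C[\n]$ and check two things: that $D_y(1)=y\cdot 1$, and that $D_y$ satisfies the same recursion
$$D_y(af)=aD_y(f)+[y,a]\cdot f.$$
Uniqueness then follows by induction on degree.

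\textbf{Step 1: $X_{i,i}$, $U_i$ and $V_i$ for $i\ge 2$.} For these generators, $\mathrm{ad}\,y$ preserves $\n$. For $X_{i,i}$ this is clear because $\n$ is a sum of root spaces. For $U_i$ and $V_i$ with $i\geq 2$, it is a short matrix computation with the root vectors listed in the table: brackets with $U_1$, $X_{1,i}$ and $Y_{1,i}$ land in $\n$ or vanish. In this case $[y,a]\cdot f=[y,a]f$ is just multiplication, so $y\cdot f=(y\cdot 1)f+\delta_y(f)$. Here $\delta_y$ is the derivation of $\C[\n]$ extending $\mathrm{ad}\,y|_\n$. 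I will check on the generators $U_1,X_{1,j},Y_{1,j}$ that these derivations are exactly $\wX_{i,i}$, $\widetilde U_i$ and $\widetilde V_i$. This amounts to computing the root weights and the brackets $[U_i,X_{1,i}]=-U_1$, $[U_i,U_1]=Y_{1,i}$, etc., with the signs read off from the matrices.

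\textbf{Step 2: $V_1$.} This generator does not preserve $\n$. Writing $b_a:=[V_1,a]$ for $a\in\{U_1,X_{1,j},Y_{1,j}\}$, I expect $b_{U_1}=X_{1,1}$, $b_{X_{1,j}}=V_j$ and $b_{Y_{1,j}}=U_j$ (up to signs fixed by the matrices). These normalizations are what make
$$\widetilde V_1=\sum_a \widetilde b_a\,\partial_a,$$
where $\widetilde b_a$ is the operator from Step 1. Define
$$D(f)=v_1f+\sum_a (b_a\cdot 1)\,\partial_a f+\tfrac12\widetilde V_1 f,$$
which is the claimed formula. Clearly $D(1)=v_1$. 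The first-order part contributes $(b_a\cdot 1)f$ to $D(af)-aD(f)$. A direct computation gives
$$\widetilde V_1(af)=a\widetilde V_1 f+\widetilde b_a f+\sum_{a'}\widetilde b_{a'}(a)\,\partial_{a'}f.$$

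\textbf{Main obstacle.} The key point is to show that the last sum equals $\widetilde b_a f$. That is, I must show $[b_{a'},a]=[b_a,a']$ for all $a,a'\in\n$. This follows from the Jacobi identity:
$$[[V_1,a'],a]-[[V_1,a],a']=[V_1,[a',a]]=0,$$
because $\n$ is abelian. Granting this, $\tfrac12\widetilde V_1$ contributes exactly $\widetilde b_a f$. Combined with Step 1, this gives
$$D(af)-aD(f)=(b_a\cdot 1+\widetilde b_a)f=b_a\cdot f,$$
which is the required recursion. This symmetry, which explains the factor $\tfrac12$, is the only conceptual step. The remaining work is checking that the signs in the definitions of $\widetilde U_j$, $\widetilde V_j$ and $\widetilde X_{1,1}$ match the actual brackets $b_a$ computed from the matrices.
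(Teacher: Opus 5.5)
Your proof is correct, but it is organized differently from the paper's. The paper works inside $\mathcal{U}(\mathfrak{so}_{2l+1})$. It computes $[V_1,U_1^m]=mU_1^{m-1}X_{1,1}+\frac{m(m-1)}{2}U_1^{m-1}$, $[V_1,X_{1,j}^m]=mX_{1,j}^{m-1}V_j$ and $[V_1,Y_{1,j}^m]=mY_{1,j}^{m-1}U_j$, then expands the commutator of $V_1$ with a general monomial by the Leibniz rule. It applies $V_1\cdot f=fV_1\cdot 1+[V_1,f]\cdot 1$ and recognizes the collected second-order terms by hand as $\frac12\widetilde{V}_1$.

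You instead take the stated operator as a candidate and check that it satisfies the recursion $D(af)=aD(f)+[V_1,a]\cdot f$ together with $D(1)=v_1$. Everything then reduces to the first-order formulas of Step 1 plus the symmetry $[[V_1,a'],a]=[[V_1,a],a']$, which is Jacobi together with $[\mathfrak{n},\mathfrak{n}]=0$. The details check out. The brackets $[V_1,U_1]=X_{1,1}$, $[V_1,X_{1,j}]=V_j$ and $[V_1,Y_{1,j}]=U_j$ all hold with plus signs, so $\widetilde{V}_1=\sum_a\widetilde{b}_a\partial_a$ needs no sign adjustment. Likewise $\widetilde{V}_1(af)-a\widetilde{V}_1(f)=2\widetilde{b}_af$, as you claim.

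Your route explains the factor $\frac12$: $\widetilde{b}_af$ appears twice, once from differentiating $a$ and once from the $\widetilde{b}_{a'}$ hitting $a$, and the two agree by the Jacobi symmetry. It also avoids all monomial bookkeeping. It applies verbatim to any element of the opposite nilradical of a parabolic with abelian nilradical, such as the analogous operator for $\mathfrak{n}_l$ in Section 4. The paper's route derives the formula rather than verifying one supplied in advance. That matters for discovering the operator, but not for proving the lemma as stated.
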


\begin{proof}
We only consider the action of $V_1$, and the others are similar. From
$$[V_1, U_1^m]= mU_1^{m-1}X_{1,1}+ \frac{m(m-1)}{2}U_1^{m-1},$$
and
$$[V_1, X_{1,j}^m]= mX_{1,j}^{m-1}V_j, [V_1, Y_{1,j}^m]= mY_{1,j}^{m-1}U_j, 2\leq j\leq l, m\in \mathbb{Z}_+,$$
we have
$$[V_1, U_1^r\prod_{j=2}^l (X_{1,j}^{s_j}Y_{1,j}^{t_j})]= [V_1, U_1^r]\prod_{j=2}^l (X_{1,j}^{s_j}Y_{1,j}^{t_j})+ \sum_{j=2}^l U_1^r\prod_{k=2}^{j-1}X_{1,k}^{s_k}[V_1, X_{1,j}^{s_j}]\prod_{k=j+1}^l X_{1,k}^{s_k}\prod_{k=2}^l Y_{1,k}^{t_k}$$
$$+\sum_{j=2}^l U_1^r\prod_{k=2}^l X_{1,k}^{s_k}\prod_{k=2}^{j-1}Y_{1,k}^{t_k}[V_1, Y_{1,j}^{t_j}]\prod_{k=j+1}^l Y_{1,k}^{t_k}$$
$$=rU_1^{r-1}\prod_{j=2}^l (X_{1,j}^{s_j}Y_{1,j}^{t_j})X_{1,1}+ rU_1^{r-1}\sum_{j=2}^l(s_j+t_j)\prod_{k=2}^l (X_{1,k}^{s_k}Y_{1,k}^{t_k})+ \frac{r(r-1)}{2}U_1^{r-1}\prod_{j=2}^l (X_{1,j}^{s_j}Y_{1,j}^{t_j})$$
$$+\sum_{j=2}^l U_1^rs_j\prod_{k=2}^{l}X_{1,k}^{s_k- \delta_{k,j}}(\prod_{k=2}^l Y_{1,k}^{t_k}V_j- t_j\prod_{k=2}^{l}Y_{1,k}^{t_k- \delta_{k,j}}U_1)+ \sum_{j=2}^l U_1^r\prod_{k=2}^l X_{1,k}^{s_k}t_j\prod_{k=2}^l Y_{1,k}^{t_k- \delta_{k,j}}U_j,$$
where $r, s_j, t_j\in \mathbb{N}$. Therefore
$$\begin{aligned}
V_1\cdot f=& fV_1\cdot 1+ [V_1, f]\cdot 1\\
=& v_1f+ \Big(x_{1,1}\frac{\partial}{\partial U_1}+ \sum_{j=2}^l (X_{1,j}\frac{\partial^2}{\partial U_1\partial X_{1,j}}+ Y_{1,j}\frac{\partial^2}{\partial U_1\partial Y_{1,j}})+ \frac{1}{2}U_1\frac{\partial^2}{\partial U_1^2}\\
&+ \sum_{j=2}^l (v_j\frac{\partial}{\partial X_{1,j}}- U_1\frac{\partial^2}{\partial X_{1,j}\partial Y_{1,j}}+ u_j\frac{\partial}{\partial Y_{1,j}})\Big)f\\
=& (v_1+ x_{1,1}\frac{\partial}{\partial U_1}+ \sum_{j=2}^l (v_j\frac{\partial}{\partial X_{1,j}}+ u_j\frac{\partial}{\partial Y_{1,j}})+ \frac{1}{2}\widetilde{V}_1)f.
\end{aligned}$$
\end{proof}

\begin{lemma}\label{lem32}
Let $x_{i,i}, u_i, v_i$ be as in Lemma \ref{lem31}. Then
$$x_{1,1}= \widetilde{X}_{1,1}(\Phi)+C, x_{i,i}= \widetilde{X}_{i,i}(\Phi), u_i= \widetilde{U}_i(\Phi), v_i= \widetilde{V}_i(\Phi), 2\leq i\leq l,$$
$$v_1= \frac{1}{2}(\widetilde{X}_{1,1}(\Phi)\frac{\partial}{\partial U_1}+ \sum_{j=2}^l (\widetilde{V}_j(\Phi)\frac{\partial}{\partial X_{1,j}}+ \widetilde{U}_j(\Phi)\frac{\partial}{\partial Z_{1,j}})+ \widetilde{V}_1+ 2C\frac{\partial}{\partial U_1})(\Phi),$$
for some $\Phi\in \mathbb{C}[\mathfrak{n}]$ and some $C\in \mathbb{C}$.
\end{lemma}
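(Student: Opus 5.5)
Write each element of $\mathfrak{so}_{2l+1}(\mathbb{C})$ as the operator given in Lemma \ref{lem31}. The unknowns are the polynomials $x_{i,i},u_i,v_i$. We impose the Lie bracket relations of $\mathfrak{so}_{2l+1}(\mathbb{C})$ on these operators. Because $\mathcal{U}(\mathfrak{n})$ acts on itself by left multiplication, the resulting conditions become closedness (curl-free) conditions on the ``connection form'' $(x_{i,i},u_i,v_i)$. These then integrate to a single potential $\Phi$.

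First I would use the relations coming from $\mathfrak{n}$ itself. For $w\in\{U_1,X_{1,j},Y_{1,j}\}$, write $[a,w]=\sum_k c_k n_k$ with $n_k$ the basis of $\mathfrak{n}$. Applying $a\cdot(wf)-w(a\cdot f)=[a,w]\cdot f$ with $f=1$ shows that $\widetilde{X}_{i,i},\widetilde{U}_i,\widetilde{V}_i$ are exactly the vector fields recording the adjoint action of $X_{i,i},U_i,V_i$ on $\mathfrak{n}$. This is consistent with Lemma \ref{lem31}, which is automatic.

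The essential constraints come from the brackets among $X_{i,i}$, $U_i$, $V_i$ for $i\ge 2$. These elements lie in the Levi factor $\mathfrak{so}_{2l-1}\oplus\mathbb{C}X_{1,1}$. They act by first-order operators of the form $p+D$, where $D$ is a vector field from a linear representation. Consider two operators $a\mapsto p_a+D_a$ that satisfy the bracket relations. Commutation of $[X_{i,i},X_{j,j}]=0$ gives $\widetilde{X}_{i,i}(x_{j,j})=\widetilde{X}_{j,j}(x_{i,i})$. In general, the family $a\mapsto p_a$ is a $1$-cocycle of the Levi factor $\mathfrak{l}$ with values in $\mathbb{C}[\mathfrak{n}]$, twisted by the linear vector fields. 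I would then argue that this cocycle is a coboundary plus a constant character. Since $\mathbb{C}[\mathfrak{n}]$ is a locally finite, completely reducible module over the semisimple part $[\mathfrak{l},\mathfrak{l}]\cong\mathfrak{so}_{2l-1}$, Whitehead's first lemma gives $p_a=D_a(\Phi_0)$ on $[\mathfrak{l},\mathfrak{l}]$. The center $\mathbb{C}X_{1,1}$ acts by the Euler operator $\widetilde{X}_{1,1}$, which is diagonalizable with eigenvalues in $\mathbb{N}$. Decompose $x_{1,1}$ by degree: its nonzero-degree part is $\widetilde{X}_{1,1}$ of something, and the degree-zero part gives the constant $C$. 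To make this compatible, I would take $\Phi_0$ as the starting point. Commutation of $X_{1,1}$ with $\mathfrak{l}$ forces $x_{1,1}-\widetilde X_{1,1}(\Phi_0)$ to be $[\mathfrak l,\mathfrak l]$-invariant, and the Euler-equation argument then adjusts $\Phi_0$ by an invariant to get $\Phi$ with $x_{1,1}=\widetilde X_{1,1}(\Phi)+C$. This yields the first line of the statement.

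Finally, $v_1$ is not free: $V_1$ lies in $[\mathfrak{l},\mathfrak{so}]$-generated data. For example, $V_1=[V_i,X_{i,1}]$-type relations hold, or more directly one can use $[V_1,U_1]=X_{1,1}$-type and $[V_1,X_{1,j}]=V_j$ relations. Applying such a relation to $1$ determines $v_1$ from $x_{1,1},u_j,v_j$ and the operator formula of Lemma \ref{lem31}. Concretely, I would use $[X_{1,1},V_1]=-V_1$ together with $[U_i,V_1]$ and $[V_j,V_1]$ relations, evaluated on $1$. Substituting the established forms of $x_{1,1},u_j,v_j$ should then show that $v_1-\frac12(\cdots)(\Phi)$ is annihilated by enough operators to vanish, and a homogeneity (degree) comparison kills any leftover constant. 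I expect this last step to be the main obstacle. It requires carefully computing the second-order operator $\widetilde V_1$ applied to $\Phi$ and matching the factor $\frac12$ and the $2C\partial/\partial U_1$ term. The cohomological step is conceptually clean but needs care only with the center.
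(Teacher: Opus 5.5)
Your treatment of $x_{1,1}$, $x_{i,i}$, $u_i$, $v_i$ for $i\ge 2$ is correct, and it takes a genuinely different route from the paper's. You treat $a\mapsto a\cdot 1$ as a $1$-cocycle on the Levi factor $\mathfrak{so}_{2l-1}\oplus\mathbb{C}X_{1,1}$ with values in the locally finite module $\mathbb{C}[\mathfrak{n}]$. Whitehead's lemma on $\mathfrak{so}_{2l-1}$, followed by a correction with a degree-wise invariant primitive for the central Euler operator, then yields a single $\Phi$ that works for all of these. The paper argues by hand instead:
\begin{itemize}
\item it takes $\Phi$ from $x_{1,1}$, which is possible because $\widetilde{X}_{1,1}$ is the Euler operator;
\item it gets $x_{i,i}=\widetilde{X}_{i,i}(\Phi)+C_i$ from $[X_{1,1},X_{i,i}]=0$;
\item it determines $u_i$ and $v_i$ from $[X_{i,i}\mp X_{1,1},\,\cdot\,]$, since $\widetilde{X}_{i,i}-\widetilde{X}_{1,1}-1$ and $\widetilde{X}_{i,i}+\widetilde{X}_{1,1}+1$ are invertible on $\mathbb{C}[\mathfrak{n}]$;
\item it kills $C_i$ using $[U_i,V_i]=-X_{i,i}$.
\end{itemize}
Your route gives $C_i=0$ for free, because a semisimple algebra has no nonzero characters, and it is more structural. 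The paper's route is elementary and needs no cohomology.

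The formula for $v_1$, however, is not proved in your proposal. You leave it as ``should then show'' and plan to combine several relations with a homogeneity argument for a ``leftover constant''. The missing point is that one relation suffices and leaves no ambiguity. Applying $[X_{1,1},V_1]=-V_1$ to $1$ and using Lemma \ref{lem31} gives
\[
(\widetilde{X}_{1,1}+1)v_1=\Big((\widetilde{X}_{1,1}(\Phi)+C)\frac{\partial}{\partial U_1}+\sum_{j=2}^l\big(\widetilde{V}_j(\Phi)\frac{\partial}{\partial X_{1,j}}+\widetilde{U}_j(\Phi)\frac{\partial}{\partial Y_{1,j}}\big)+\frac12\widetilde{V}_1\Big)\widetilde{X}_{1,1}(\Phi).
\]
Since $\widetilde{X}_{1,1}+1$ acts by $d+1$ on homogeneous polynomials of degree $d$, it is bijective. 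So $v_1$ is uniquely determined, constants included. The other relations you list add nothing: $[V_1,U_1]$ and $[V_1,X_{1,j}]$ are already built into Lemma \ref{lem31}. Using $V_1\sim[V_i,X_{i,1}]$ is circular, because the action of $X_{i,1}$ is not yet known.

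What remains is to check that the candidate solves this equation. Write $L_\Phi(g)$ for $\widetilde{X}_{1,1}(\Phi)\,\partial g/\partial U_1+\sum_j\big(\widetilde{V}_j(\Phi)\,\partial g/\partial X_{1,j}+\widetilde{U}_j(\Phi)\,\partial g/\partial Y_{1,j}\big)$. Three facts do the work:
\begin{itemize}
\item $(\widetilde{X}_{1,1}+1)\frac{\partial}{\partial U_1}=\frac{\partial}{\partial U_1}\widetilde{X}_{1,1}$;
\item $(\widetilde{X}_{1,1}+1)\widetilde{V}_1=\widetilde{V}_1\widetilde{X}_{1,1}$, since $\widetilde{V}_1$ lowers degree by one;
\item crucially, the symmetry $L_\Phi(g)=L_g(\Phi)$. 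This is the identity the paper records before Theorem \ref{thm35}; expanding it shows it is symmetric in $\Phi$ and $g$.
\end{itemize}
Because $\widetilde{X}_{1,1}$ is a derivation commuting with $\widetilde{U}_j$ and $\widetilde{V}_j$, these give $(\widetilde{X}_{1,1}+1)L_\Phi(\Phi)=L_{\widetilde{X}_{1,1}\Phi}(\Phi)+L_\Phi(\widetilde{X}_{1,1}\Phi)=2L_\Phi(\widetilde{X}_{1,1}\Phi)$. This is exactly where the factor $\frac12$ comes from.

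Finally, the $\partial/\partial Z_{1,j}$ in the displayed statement is a typo for $\partial/\partial Y_{1,j}$. As written it is meaningless, since $Z_{1,j}\notin\mathfrak{n}$.
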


\begin{proof} 
Obviously, $x_{1,1}$ can be written as $\widetilde{X}_{1,1}(\Phi)+C, \Phi\in \mathbb{C}[\mathfrak{n}], C\in \mathbb{C}$. Since $[X_{1,1}, X_{i,i}]=0, 2\leq i \leq l$, we have
$$(x_{1,1}+\widetilde{X}_{1,1})x_{i,i}= (x_{i,i}+\widetilde{X}_{i,i})x_{1,1},$$
yielding
$$\widetilde{X}_{1,1}x_{i,i}= \widetilde{X}_{i,i}x_{1,1}, 2\leq i\leq l.$$
Then
$$x_{i,i}=\wX_{i,i}(\Phi)+C_i, C_i\in \mathbb{C}, 2\le i\le l.$$

For $2\le i\le l$, $[X_{i,i}- X_{1,1}, U_i]\cdot 1= U_i\cdot 1$ implies that
$$(\widetilde{X}_{i,i}- \widetilde{X}_{1,1}- 1)u_i= \widetilde{U}_i(x_{i,i}- x_{1,1}).$$
It is easy to check that $u_i= \widetilde{U}_i(\Phi)$ is the unique solution for this differential equation. Similarly, we have $v_i= \widetilde{V}_i(\Phi)$ from $[X_{i,i}+ X_{1,1}, V_i]\cdot 1= -V_i\cdot 1$.

For $2\le i\le l$, by $[U_i, V_i]\cdot 1= -X_{i,i}\cdot 1$, it follows that
\[\Big(u_i+ \widetilde{U}_i\Big)v_i- \Big(v_i+ \widetilde{V}_i\Big)u_i= -x_{i,i},\]
that is,
$$\widetilde{U}_i\widetilde{V}_i(\Phi)- \widetilde{V}_i\widetilde{U}_i(\Phi)= -\widetilde{X}_{i,i}(\Phi)- C_i$$
Thanks to $\widetilde{U}_i\widetilde{V}_i- \widetilde{V}_i\widetilde{U}_i= -\widetilde{X}_{i,i}$, we see $C_i= 0, 2\le i\le l$.

Now let us determine $v_1$. Since $[X_{1,1}, V_1]\cdot 1= -V_1\cdot 1$, it follows that
$$(\widetilde{X}_{1,1}+1)v_1= ((\widetilde{X}_{1,1}(\Phi)+ C)\frac{\partial}{\partial U_1}+ \sum_{j=2}^l (\widetilde{V}_j(\Phi)\frac{\partial}{\partial X_{1,j}}+ \widetilde{U}_j(\Phi)\frac{\partial}{\partial Y_{1,j}})+ \frac{1}{2}\widetilde{V}_1)\widetilde{X}_{1,1}(\Phi).$$
It is easy to check that
$$v_1= \frac{1}{2}(\widetilde{X}_{1,1}(\Phi)\frac{\partial}{\partial U_1}+ \sum_{j=2}^l (\widetilde{V}_j(\Phi)\frac{\partial}{\partial X_{1,j}}+ \widetilde{U}_j(\Phi)\frac{\partial}{\partial Y_{1,j}})+ \widetilde{V}_1+ 2C\frac{\partial}{\partial U_1})(\Phi)$$
is the unique solution for this differential equation.
\end{proof}

\begin{definition}
For any $C\in \mathbb{C}$ and $\Phi\in \mathbb{C}[\mathfrak{n}]$, we define the action of $U_i, V_i, 1\leq i\leq l$ on $\C[\n]$ as follows:
\begin{equation*}
\begin{cases}
U_i\cdot f= (u_i+ \widetilde{U}_i)f, V_i\cdot f= (v_i+ \widetilde{V}_i)f, &2\leq i\leq l,\\
V_1\cdot f= (v_1+ x_{1,1}\frac{\partial}{\partial U_1}+ \sum\limits_{j=2}^l (v_j\frac{\partial}{\partial X_{1,j}}+ u_j\frac{\partial}{\partial Y_{1,j}})+ \frac{1}{2}\widetilde{V}_1)f,
\end{cases}
\end{equation*}
where $f\in \mathbb{C}[\mathfrak{n}]$ and $x_{1,1}, u_i, v_i, 1\leq i\leq l$ are defined as in Lemma \ref{lem32}. It makes $\mathbb{C}[\mathfrak{n}]$ a $\mathfrak{so}_{2l+1}(\mathbb{C})$-module denoted by $M_{\mathfrak{n}}(C,\Phi)$.
\end{definition}

We have the following isomorphism criterion.
\begin{theorem}\label{thm34}
Suppose that $C_1, C_2\in\C$ and $\Phi_1,\Phi_2\in\C[\mathfrak{n}]$. Then $M_{\mathfrak{n}}(C_1, \Phi_1)\cong M_{\mathfrak{n}}(C_2, \Phi_2)$ if and only if $C_1=C_2$ and $\Phi_1-\Phi_2\in\C$.
\end{theorem}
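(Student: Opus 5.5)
Both directions are direct computations with the formulas of Lemma \ref{lem31} and Lemma \ref{lem32}. The action on $M_{\mathfrak{n}}(C,\Phi)$ is determined entirely by the polynomials $x_{i,i},u_i,v_i$. By Lemma \ref{lem32}, these depend on $\Phi$ only through the derivatives $\widetilde{X}_{i,i}(\Phi),\widetilde{U}_i(\Phi),\widetilde{V}_i(\Phi)$ and $\widetilde{V}_1(\Phi)$, and on $C$ only through $x_{1,1}$ and the term $C\frac{\partial}{\partial U_1}(\Phi)$ in $v_1$. Every operator appearing annihilates constants, since each term involves at least one partial derivative. Therefore, if $C_1=C_2$ and $\Phi_1-\Phi_2\in\C$, all the data $x_{i,i},u_i,v_i$ coincide, the two actions on $\C[\mathfrak{n}]$ are literally identical, and the identity map is an isomorphism.

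For the converse, let $\sigma: M_{\mathfrak{n}}(C_1,\Phi_1)\to M_{\mathfrak{n}}(C_2,\Phi_2)$ be an isomorphism of $\mathfrak{so}_{2l+1}(\C)$-modules. Since $\mathfrak{n}$ acts on both modules by left multiplication in $\C[\mathfrak{n}]$, $\sigma$ is $\C[\mathfrak{n}]$-linear. Hence $\sigma(f)=fg$ with $g=\sigma(1)$. Bijectivity forces $g$ to be a unit of the polynomial ring, so $g\in\C^*$; rescaling, we may take $\sigma=\mathrm{id}$. Consequently, every element of $\mathfrak{so}_{2l+1}(\C)$ acts identically in both modules, and applying this to $1$ gives $x^{(1)}_{i,i}=x^{(2)}_{i,i}$, $u^{(1)}_i=u^{(2)}_i$ and $v^{(1)}_i=v^{(2)}_i$.

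From $x_{1,1}$ we get
$$\widetilde{X}_{1,1}(\Phi_1)+C_1=\widetilde{X}_{1,1}(\Phi_2)+C_2.$$
The Euler operator $\widetilde{X}_{1,1}$ maps each homogeneous component of degree $d$ to $d$ times itself. Comparing constant terms therefore gives $C_1=C_2$, and comparing the components of positive degree shows that $\Phi_1-\Phi_2$ has no component of positive degree, i.e. $\Phi_1-\Phi_2\in\C$. The step needing most care is the first reduction to $\sigma$ being multiplication by a scalar: we must check that $\mathfrak{n}$ really acts by multiplication in $M_{\mathfrak{n}}(C,\Phi)$, which is part of the definition since the module is $\mathcal{U}(\mathfrak{n})$ itself. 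After that reduction, only the $x_{1,1}$ equation is needed.
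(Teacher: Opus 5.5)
Your proof is correct and follows the paper's argument: $\C[\mathfrak{n}]$-linearity forces $\sigma$ to be multiplication by the nonzero constant $\sigma(1)$, and comparing the action of $X_{1,1}$ on $1$ gives $\widetilde{X}_{1,1}(\Phi_1-\Phi_2)=C_2-C_1$, which the Euler operator turns into $C_1=C_2$ and $\Phi_1-\Phi_2\in\C$. You also write out the easy converse, which the paper leaves implicit: all the structure data involve only derivatives of $\Phi$, so the identity map is an isomorphism.
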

\begin{proof}
Suppose that $\sigma: M_{\mathfrak{n}}(C_1,\Phi_1)\rightarrow M_{\mathfrak{n}}(C_2, \Phi_2)$ is an isomorphism of $\so_{2l+1}(\C)$-modules, then $\sigma (f)= f\sigma(1)$. It implies that $\sigma (1)$ is a nonzero constant. Since $\sigma(X_{1,1}\cdot 1)=X_{1,1}\cdot \sigma(1)$, we have
\[(\wX_{1,1}(\Phi_1)+C_1)\sigma(1)= (\wX_{1,1}(\Phi_2)+C_2)\sigma(1),
\]
yielding
\begin{equation}
\wX_{1,1}(\Phi_1-\Phi_2)=C_2-C_1.
\end{equation}
Since $\wX_{1,1}$ acts diagonally on $\mathbb{C}[\mathfrak{n}]$, it follows that $C_2=C_1$ and $\Phi_1-\Phi_2\in\C$.
\end{proof}

Next, we study the simplicity of the modules $M_{\mathfrak{n}}(C,\Phi)$. By the equality 
$$\widetilde{X}_{1,1}(\Phi)\frac{\partial}{\partial U_1}+ \sum_{j=2}^l (\widetilde{V}_j(\Phi)\frac{\partial}{\partial X_{1,j}}+ \widetilde{U}_j(\Phi)\frac{\partial}{\partial Y_{1,j}})= \frac{\partial\Phi}{\partial U_1}\widetilde{X}_{1,1}+ \sum_{j=2}^l (\frac{\partial\Phi}{\partial X_{1,j}}\widetilde{V}_j+ \frac{\partial\Phi}{\partial Y_{1,j}}\widetilde{U}_j),$$
$W$ is a submodule of $M_{\mathfrak{n}}(C,\Phi)$ if and only if $W$ is an ideal of $M_{\mathfrak{n}}(C,\Phi)$ and the operators $\widetilde{U}_i, \widetilde{V}_i, 2\le i\le l, \widetilde{V}_1+ 2C\partial/\partial U_1$ map $W$ into $W$ itself. The simplicity of $M_{\mathfrak{n}}(C,\Phi)$ depends on $C\in\C$ unrelated to $\Phi\in\C[\n]$. As for $\Phi$, we have $M_{\mathfrak{n}}(C,\Phi)$ is a weight module if and only if $\Phi\in\C$. If $\Phi\in\C$, then $M_{\mathfrak{n}}(C,\Phi)$ is a lowest weight module with the lowest weight $C\Lambda_1$, where $\Lambda_1$ is the first fundamental weight.

For $2\le i\neq j\le l$, let
$$\widetilde{X}_{i,j}= [\widetilde{V}_j, \widetilde{U}_i]= Y_{1,i}\frac{\partial}{\partial Y_{1,j}}- X_{1,j}\frac{\partial}{\partial X_{1,i}},$$
$$\widetilde{Y}_{i,j}= [\widetilde{U}_j, \widetilde{U}_i]= Y_{1,i}\frac{\partial}{\partial X_{1,j}}- Y_{1,j}\frac{\partial}{\partial X_{1,i}},$$
$$\widetilde{Z}_{i,j}= [\widetilde{V}_j, \widetilde{V}_i]= X_{1,i}\frac{\partial}{\partial Y_{1,j}}- X_{1,j}\frac{\partial}{\partial Y_{1,i}},$$
and
$$\widetilde{X}_{i,1}= [\widetilde{V}_1, \widetilde{U}_i]= -\widetilde{X}_{1,1}\frac{\partial}{\partial X_{1,i}}+ \sum_{j=2}^l (\widetilde{X}_{i,j}\frac{\partial}{\partial X_{1,j}}+ \widetilde{Y}_{i,j}\frac{\partial}{\partial Y_{1,j}})+ \widetilde{U}_{i}\frac{\partial}{\partial U_1}.$$
It is easy to know that
$$X_{i,j}\cdot f= (\widetilde{X}_{i,j}(\Phi)+ \widetilde{X}_{i,j})f, Z_{i,j}\cdot f= (\widetilde{Z}_{i,j}(\Phi)+ \widetilde{Z}_{i,j})f$$
and
$$X_{i,1}\cdot f= (x_{i,1}- x_{1,1}\frac{\partial}{\partial X_{1,i}}+ \sum_{j=2}^l (\widetilde{X}_{i,j}(\Phi)\frac{\partial}{\partial X_{1,j}}+ \widetilde{Y}_{i,j}(\Phi)\frac{\partial}{\partial Y_{1,j}})+ u_i\frac{\partial}{\partial U_1}+ \frac{1}{2}\widetilde{X}_{i,1})f,$$
where
$$x_{i,1}= \frac{1}{2}(-\widetilde{X}_{1,1}(\Phi)\frac{\partial}{\partial X_{1,i}}+ \sum_{j=2}^l (\widetilde{X}_{i,j}(\Phi)\frac{\partial}{\partial X_{1,j}}+ \widetilde{Y}_{i,j}(\Phi)\frac{\partial}{\partial Z_{1,j}})+ u_i\frac{\partial}{\partial U_1}+ \widetilde{X}_{i,1}- 2C\frac{\partial}{\partial X_{1,i}})(\Phi).$$

\begin{theorem}\label{thm35}
If $\Phi\in\C[\mathfrak{n}]$ and  $C\in\C$, then the $\so_{2l+1}(\C)$-module $M_{\mathfrak{n}}(C,\Phi)$ is simple if and only if $C\notin -\mathbb{N}$. For $C\in -\mathbb{N}$, there exists a unique proper submodule $\mathcal{U}(\mathfrak{so}_{2l+1})X_{1,2}^{1-C}$.
\end{theorem}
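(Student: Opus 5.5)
Since the preceding discussion shows that a subspace $W\subseteq \mathbb{C}[\mathfrak{n}]$ is a submodule of $M_{\mathfrak{n}}(C,\Phi)$ if and only if $W$ is an ideal stable under $\widetilde{U}_i,\widetilde{V}_i$ ($2\le i\le l$) and $D_C:=\widetilde{V}_1+2C\frac{\partial}{\partial U_1}$, the submodule lattice does not depend on $\Phi$. I will therefore reduce to $\Phi=0$. In that case $M_{\mathfrak{n}}(C,0)$ is a weight module with lowest weight $C\Lambda_1$, and it is graded by total degree, which is the $\widetilde{X}_{1,1}$-eigenvalue. Every submodule $W$ is $\widetilde{X}_{1,1}$-stable, so it is graded. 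Put $\mathfrak{l}'=\langle \widetilde{U}_i,\widetilde{V}_i\rangle_{2\le i\le l}\cong\mathfrak{so}_{2l-1}(\mathbb{C})$; it acts on the span of $U_1,X_{1,j},Y_{1,j}$ as the natural $(2l-1)$-dimensional orthogonal representation. The operators $D_C$ and the $X_{i,1}$ (which, up to the $\Phi$-terms, act by $\frac12\widetilde{X}_{i,1}-C\frac{\partial}{\partial X_{1,i}}$) lower the degree by one.

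\textbf{Step 1.} Decompose each homogeneous piece $\mathbb{C}[\mathfrak{n}]_d=\bigoplus_{2k+m=d}Q^kH_m$. Here $Q=U_1^2+2\sum_{j\ge2}X_{1,j}Y_{1,j}$ is the $\mathfrak{l}'$-invariant quadric (I will fix the sign by checking $\widetilde{U}_i(Q)=0$), and $H_m$ is the simple $\mathfrak{l}'$-module of harmonic polynomials of degree $m$. Its extremal vector is $X_{1,2}^m$: this is annihilated by the $\widetilde{V}_i$, $\widetilde{Z}_{i,j}$ and $\widetilde{X}_{i,j}$ with $i<j$, in the appropriate ordering.

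\textbf{Step 2.} Let $W\ne0$ be a submodule and $d_0$ its lowest degree. Then $W_{d_0}$ is an $\mathfrak{l}'$-submodule killed by all degree-lowering operators, so it contains a vector $Q^kh$ with $h$ extremal in $H_m$ that is singular. I will compute the lowering operators on these vectors directly. First, on $X_{1,2}^m$, the operator $X_{2,1}$ gives a scalar multiple of $X_{1,2}^{m-1}$ from the terms $-\widetilde{X}_{1,1}\partial_{X_{1,2}}$ and $-2C\partial_{X_{1,2}}$. With the paper's normalization, the factor $m(m-1+C)$ should appear, and I need to pin it down carefully. Second, on $Q^kX_{1,2}^m$ with $k\ge1$, the operator $D_C$ (or $X_{2,1}$) produces a scalar multiple of $Q^{k-1}X_{1,2}^{m+1}$ plus terms, with a coefficient of the form $k(2k+2m+2l-3+2C)$. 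This is an affine function with odd constant part because $\dim=2l-1$ is odd, plus something in $C$. I must check that this coefficient and the other component (via $\partial_{U_1}$) cannot vanish simultaneously for $k\ge1$ when $C\in\mathbb{Z}$. For $C\notin\mathbb{Z}$, I will use the fact that both cannot vanish given the half-integrality mismatch. The conclusion I expect is that the only singular vectors in positive degree are multiples of $X_{1,2}^{1-C}$, and they exist exactly when $C\in-\mathbb{N}$. This computation, especially ruling out singular vectors of the form $Q^k h$, is the main obstacle. If the direct computation becomes messy, I would first try the Laplacian identity $[\Delta,Q]=4E+2(2l-1)$ on the variables.

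\textbf{Step 3.} If $C\notin-\mathbb{N}$, Step 2 forces $d_0=0$, so $1\in W$ and $W=\mathbb{C}[\mathfrak{n}]$; hence the module is simple. If $C\in-\mathbb{N}$, $X_{1,2}^{1-C}$ is singular, so $N:=\mathcal{U}(\mathfrak{so}_{2l+1})X_{1,2}^{1-C}$ is proper, since all of its elements have degree at least $1-C$. Any nonzero submodule $W$ has a lowest-degree singular vector. By Step 2, either $1\in W$, or $W\ni X_{1,2}^{1-C}$ and then $N\subseteq W$. The same argument applied inside $N$ shows that every nonzero submodule of $N$ contains $X_{1,2}^{1-C}$, so $N$ is simple. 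It remains to show that $N$ is the unique proper submodule, that is, that no proper $W$ strictly contains $N$. For this, consider $\mathbb{C}[\mathfrak{n}]/N$, which has degrees $\le -C$ and is finite-dimensional, so it is the simple finite-dimensional module $L(C\Lambda_1)$ up to the lowest-weight twist. Equivalently, any $W\supsetneq N$ would have a singular vector modulo $N$ in degree $\le -C$, and the same computation shows that only degree $0$ is possible. This completes the proof for $\Phi=0$, and the statement transfers to arbitrary $\Phi$ by the initial reduction.
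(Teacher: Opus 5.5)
There is a genuine gap, and it sits exactly where you placed the ``main obstacle'': the case $m=0$ of Step~2 cannot be excluded, and in fact the statement fails there. Your set-up is sound, and more careful than the paper's: the reduction to $\Phi=0$, the grading, a singular vector in the lowest degree of a submodule, and the decomposition into $Q^kH_m$. The problem is that for $m=0$ the vector $Q^k$ is $\mathfrak{l}'$-invariant. The degree-lowering part of the algebra is generated by $V_1$ under $\mathrm{ad}\,\mathfrak{l}'$, so it acts on $Q^k$ through a single scalar. There is no ``other component'' that must vanish at the same time, and the half-integrality mismatch only disposes of the mixed case $k,m\geq 1$.

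Concretely, for $\Phi=0$ put $\Delta=\frac{\partial^2}{\partial U_1^2}+2\sum_{j=2}^l\frac{\partial^2}{\partial X_{1,j}\partial Y_{1,j}}$. Then $\widetilde{V}_1=2\widetilde{X}_{1,1}\frac{\partial}{\partial U_1}-U_1\Delta$. So $V_1$ acts as $(\widetilde{X}_{1,1}+C)\frac{\partial}{\partial U_1}-\frac{1}{2}U_1\Delta$, and $X_{2,1}=[V_1,U_2]$ acts as $-(\widetilde{X}_{1,1}+C)\frac{\partial}{\partial X_{1,2}}+\frac{1}{2}Y_{1,2}\Delta$. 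Using $\Delta(Q^kX_{1,2}^m)=2k(2k+2m+2l-3)Q^{k-1}X_{1,2}^m$ one finds
\begin{align*}
V_1\cdot Q^kX_{1,2}^m&=k(2k+2C-2l+1)\,U_1Q^{k-1}X_{1,2}^m,\\
X_{2,1}\cdot Q^kX_{1,2}^m&=-k(2k+2C-2l+1)\,Y_{1,2}Q^{k-1}X_{1,2}^m-m(2k+m-1+C)\,Q^kX_{1,2}^{m-1}.
\end{align*}
So your guessed coefficient is off; the correct one does not involve $m$. The two polynomials in the second line are linearly independent. Hence $Q^kX_{1,2}^m$ of positive degree is singular exactly when $k=0$ and $m=1-C$, or when $m=0$ and $k=l-\frac{1}{2}-C\in\mathbb{Z}_+$.

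The simplest case can be checked by hand. From $[V_1,U_1]=X_{1,1}$, $[V_1,X_{1,j}]=V_j$ and $[V_j,Y_{1,j}]=-U_1$ one gets $V_1\cdot Q=(2C+3-2l)U_1$, while $U_i,V_i$ ($i\geq 2$) kill $Q$. Hence for $C=l-\frac{3}{2}$ (e.g.\ $l=2$, $C=\frac{1}{2}$) the ideal $Q\,\mathbb{C}[\mathfrak{n}]$ is a proper submodule although $C\notin-\mathbb{N}$. So the ``if'' direction of the statement is false, and the correct criterion is $C\notin-\mathbb{N}\cup(l-\frac{3}{2}-\mathbb{N})$.

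The paper's proof has the same hole. It asserts that $\widetilde{X}_{i+1,i}(f)=\widetilde{V}_l(f)=0$ forces $f=cX_{1,2}^{s_2}$, but every $Q^kX_{1,2}^m$ satisfies these equations. In the $\mathfrak{n}_1$ case the paper does find the analogous invariant $S$, which produces $S^{l-1-C}$. So your method, carried out, refutes rather than proves the simplicity claim.

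What survives is the case $C\in-\mathbb{N}$, where no $Q^k$ is singular. Even there Step~3 needs repair:
\begin{itemize}
\item $N$ is the ideal generated by the harmonic polynomials of degree $1-C$. So $\mathbb{C}[\mathfrak{n}]/N$ occupies degrees $0,\dots,-2C$, not $\leq -C$; for $C=-1$, $Q\notin N$.
\item Vectors that are singular only modulo $N$ are not controlled by your computation in $\mathbb{C}[\mathfrak{n}]$.
\end{itemize}
Use instead the standard fact that, for dominant integral data ($\lambda=-C\Lambda_1$ in the highest-weight picture), the kernel of a parabolic Verma module onto its finite-dimensional quotient is generated by $f_{\alpha_1}^{\langle\lambda,\alpha_1^\vee\rangle+1}v_\lambda$. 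Here that vector is $X_{1,2}^{1-C}$.
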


\begin{proof} 
We can only consider $\Phi\in \mathbb{C}$, in which case $M_{\mathfrak{n}}(C,\Phi)$ is a lowest weight module with lowest weight vector $1$. Let $f$ be a singular vector of $M_{\mathfrak{n}}(C,\Phi)$. For $2\le i\le l-1$, $X_{i+1,i}(f)= V_l(f)= 0$ implies that $\widetilde{X}_{i+1,i}(f)= \widetilde{V}_l(f)= 0$, which forces $f= cX_{1,2}^{s_2}, c\in \C$. Furthermore, $X_{2,1}(f)= 0$ implies that
$$(\widetilde{X}_{2,1}- 2C\frac{\partial}{\partial X_{1,2}})(f)=2(-s_2+ 1- C)\frac{\partial f}{\partial X_{1,2}}= 0$$
if and only if $f\in \mathbb{C}^*$ or $f= cX_{1,2}^{s_2}, s_2= 1- C\geq 1$. The conclusion is proven.
\end{proof}

\section{$\so_{2l}(\C)$-module structures on $\UU(\n_1)$ and $\UU(\n_l)$}

\subsection{Category $\mathcal{M}(\n_1)$} We will study the $\so_{2l}(\C)$-module category $\mathcal{M}(\n_1)$ whose objects are $\so_{2l}(\C)$-modules $\UU(\n_1)=\C[\n_1]$ that are free $\UU(\n_1)$-modules of rank $1$. Since $\so_{2l}(\C)$ and $\UU(\n_1)$ are subalgebras of $\so_{2l+1}(\C)$ and $\UU(\n)$ respectively, we can determine $\mathcal{M}(\n_1)$ from $\mathcal{M}(\n)$. By checking other Serre relations, we can define $\mathfrak{so}_{2l}(\mathbb{C})$-modules as follows.

\begin{definition}
For any $C\in \mathbb{C}$ and $\Phi, f\in \mathbb{C}[\mathfrak{n}_1]$, we define the action of the Chevalley generators on $\mathbb{C}[\mathfrak{n}_1]$ as follows:
\begin{equation*}
\begin{cases}
h_{\alpha_i}\cdot f= ((x_{i,i}- x_{i+1,i+1})+ \wX_{i,i}- \wX_{i+1,i+1})f, 1\leq i\leq l-1,\\
e_{\alpha_1}\cdot f= X_{1,2}f, f_{\alpha_1}\cdot f= (x_{2,1}- x_{1,1}\frac{\partial}{\partial X_{1,2}}+ \sum\limits_{j=2}^l (\widetilde{X}_{2,j}(\Phi)\frac{\partial}{\partial X_{1,j}}+ \widetilde{Y}_{2,j}(\Phi)\frac{\partial}{\partial Y_{1,j}})+ \frac{1}{2}\widetilde{X}_{2,1})f,\\
e_{\alpha_i}\cdot f= (\widetilde{X}_{i,i+1}(\Phi)+ \widetilde{X}_{i,i+1})f, f_{\alpha_i}\cdot f= (\widetilde{X}_{i+1,i}(\Phi)+ \widetilde{X}_{i+1,i})f, 2\leq i\leq l-1,\\
h_{\alpha'_l}\cdot f= (x_{l-1,l-1}+ x_{l,l}+ \wX_{l-1,l-1}+\wX_{l,l})f,\\
f_{\alpha'_l}\cdot f= (\widetilde{Z}_{l,l-1}(\Phi)+ \widetilde{Z}_{l,l-1})f, e_{\alpha'_l}\cdot f= (\widetilde{Y}_{l-1,l}(\Phi)+ \widetilde{Y}_{l-1,l})f,
\end{cases}
\end{equation*}
where $\widetilde{X}_{i,i}, x_{i,i}, 2\leq i\leq l$ are given in Section 3, and $\widetilde{X}_{1,1}, \widetilde{X}_{2,1}, x_{2,1}$ are redefined as follows
$$\widetilde{X}_{1,1}= \sum_{j=2}^l(X_{1,j}\frac{\partial}{\partial X_{1,j}}+ Y_{1,j}\frac{\partial}{\partial Y_{1,j}}), \widetilde{X}_{2,1}= -\widetilde{X}_{1,1}\frac{\partial}{\partial X_{1,2}}+ \sum_{j=2}^l (\widetilde{X}_{2,j}\frac{\partial}{\partial X_{1,j}}+ \widetilde{Y}_{2,j}\frac{\partial}{\partial Y_{1,j}}),$$
$$x_{2,1}= \frac{1}{2}(-\widetilde{X}_{1,1}(\Phi)\frac{\partial}{\partial X_{1,2}}+ \sum_{j=2}^l (\widetilde{X}_{2,j}(\Phi)\frac{\partial}{\partial X_{1,j}}+ \widetilde{Y}_{2,j}(\Phi)\frac{\partial}{\partial Z_{1,j}})+ \widetilde{X}_{2,1}- 2C\frac{\partial}{\partial X_{1,2}})(\Phi).$$
It makes $\mathbb{C}[\mathfrak{n}_1]$ a $\mathfrak{so}_{2l}(\mathbb{C})$-module denoted by $M_{\mathfrak{n}_1}(C,\Psi)$.
\end{definition}

As analyzed in the previous section, the simplicity of $M_{\mathfrak{n}_1}(C,\Psi)$ depends on $C\in\C$ unrelated to $\Psi\in\C[\n_1]$. As for $\Psi$, we have $M_{\mathfrak{n}_1}(C,\Psi)$ is a weight module if and only if $\Psi\in\C$. If $\Phi\in\C$, then $M_{\mathfrak{n}_1}(C,\Psi)$ is a lowest weight module with the lowest weight $C\Lambda_1$, where $\Lambda_1$ is the first fundamental weight. Similar to Theorem \ref{thm34} and Theorem \ref{thm35}, we have the following isomorphism criterion and simplicity.
\begin{theorem} Suppose that $C_1, C_2\in\C$ and 
 $\Phi_1,\Phi_2\in\C[\mathfrak{n}_1]$. Then $M_{\mathfrak{n}_1}(C_1, \Phi_1)\cong M_{\mathfrak{n}_1}(C_2, \Phi_2)$ if and only if $C_1=C_2$ and $\Phi_1-\Phi_2\in\C$.
\end{theorem}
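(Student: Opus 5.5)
We follow the proof of Theorem \ref{thm34}, replacing $\so_{2l+1}(\C)$, $\n$ and $X_{1,1}$ by $\so_{2l}(\C)$, $\n_1$ and the element $X_{1,1}\in\h_D$. The element $X_{1,1}$ lies in the Cartan subalgebra of $\so_{2l}(\C)$; for instance, $X_{1,1}=\sum_{i=1}^{l-2}h_{\alpha_i}+\frac12(h_{\alpha_{l-1}}+h_{\alpha'_l})$. By the definition of $M_{\n_1}(C,\Phi)$, together with the formulas of Section 3 restricted to $\n_1$, it acts by
$$X_{1,1}\cdot f=\big(\wX_{1,1}(\Phi)+C+\wX_{1,1}\big)f, \qquad \wX_{1,1}=\sum_{j=2}^l\Big(X_{1,j}\frac{\partial}{\partial X_{1,j}}+Y_{1,j}\frac{\partial}{\partial Y_{1,j}}\Big).$$
So $x_{1,1}=\wX_{1,1}(\Phi)+C$, and $\wX_{1,1}$ is the Euler (total degree) operator on $\C[\n_1]$.

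\emph{Sufficiency.} Suppose $C_1=C_2$ and $\Phi_1-\Phi_2\in\C$. All structure functions in the definition ($x_{i,i}$, $x_{2,1}$, $\wX_{i,j}(\Phi)$, $\widetilde{Y}_{l-1,l}(\Phi)$, $\widetilde{Z}_{l,l-1}(\Phi)$) are obtained by applying differential operators without constant term to $\Phi$, plus terms involving $C$. Hence they coincide for the two modules, the two actions are literally the same, and the identity map is an isomorphism.

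\emph{Necessity.} Let $\sigma:M_{\n_1}(C_1,\Phi_1)\to M_{\n_1}(C_2,\Phi_2)$ be an isomorphism. Since $e_{\alpha_1}$ acts by multiplication by $X_{1,2}$, and the other elements of $\n_1$ are obtained by brackets with $e_{\alpha_i}$ and $e_{\alpha'_l}$, the $\UU(\n_1)$-action is the left multiplication on $\C[\n_1]$ in both modules. Therefore $\sigma(f)=f\sigma(1)$. Because $\sigma$ is bijective, $\sigma(1)$ must be a unit of $\C[\n_1]$, hence a nonzero constant.

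Applying $\sigma$ to $X_{1,1}\cdot 1$ gives $\wX_{1,1}(\Phi_1-\Phi_2)=C_2-C_1$. The operator $\wX_{1,1}$ acts on homogeneous components of degree $d$ by multiplication by $d$, so its image has no constant term. Hence $C_1=C_2$, and then $\wX_{1,1}(\Phi_1-\Phi_2)=0$ forces $\Phi_1-\Phi_2\in\C$.

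The main point to justify is that $\n_1$ acts by multiplication in $M_{\n_1}(C,\Phi)$ and that $X_{1,1}$ acts by the stated formula. Both follow because the module is, by construction, the restriction of the type $B$ formulas of Lemma \ref{lem31} to $\C[\n_1]\subset\C[\n]$, with $\Phi$ chosen independent of $U_1$.
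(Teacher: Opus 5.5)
Your proposal is correct and is the argument the paper intends; it only says ``similar to Theorem~\ref{thm34}''. You show $\sigma(f)=f\sigma(1)$ with $\sigma(1)\in\mathbb{C}^*$, then compare $X_{1,1}\cdot 1$ in the two modules to get $\widetilde{X}_{1,1}(\Phi_1-\Phi_2)=C_2-C_1$, and the Euler operator $\widetilde{X}_{1,1}$ on $\mathbb{C}[\mathfrak{n}_1]$ forces $C_1=C_2$ and $\Phi_1-\Phi_2\in\mathbb{C}$. Your additional checks are correct and fill in details the paper leaves implicit: that $X_{1,1}=\sum_{i=1}^{l-2}h_{\alpha_i}+\frac12(h_{\alpha_{l-1}}+h_{\alpha'_l})$ lies in $\mathfrak{so}_{2l}(\mathbb{C})$ and acts by $x_{1,1}+\widetilde{X}_{1,1}$, that $\mathfrak{n}_1$ acts by multiplication, and the sufficiency direction.
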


\begin{theorem}
If $\Phi\in\C[\mathfrak{n}_1]$ and  $C\in\C$, then the $\so_{2l}(\C)$-module $M_{\mathfrak{n}_1}(C,\Phi)$ is simple if and only if $C\notin l-2 -\mathbb{N}$. For $0< C\in l-2 -\mathbb{N}$, there exists a unique proper submodule $\mathcal{U}(\mathfrak{so}_{2l})(\sum_{i=2}^l X_{1,i}Y_{1,i})^{l-1-C}$. For $C\in -\mathbb{N}$, the composition series of $M_{\mathfrak{n}_1}(C,\Phi)$ are
$$0\leq \mathcal{U}(\mathfrak{so}_{2l})X_{1,2}^{1-C}\leq \mathcal{U}(\mathfrak{so}_{2l})\{X_{1,2}^{1-C}, (\sum_{i=2}^l X_{1,i}Y_{1,i})^{l-1-C}\}\leq M_{\mathfrak{n}_1}(C,\Phi)$$
and
$$0\leq \mathcal{U}(\mathfrak{so}_{2l})(\sum_{i=2}^l X_{1,i}Y_{1,i})^{l-1-C}\leq \mathcal{U}(\mathfrak{so}_{2l})\{X_{1,2}^{1-C}, (\sum_{i=2}^l X_{1,i}Y_{1,i})^{l-1-C}\}\leq M_{\mathfrak{n}_1}(C,\Phi).$$

\end{theorem}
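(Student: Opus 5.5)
We follow the proof of Theorem \ref{thm35}. As in the discussion preceding it, a subspace $W$ is a submodule if and only if it is an ideal of $\C[\n_1]$ that is stable under the $\Phi$-free parts of the operators, so the submodule lattice does not depend on $\Phi$. We may therefore assume $\Phi\in\C$. Then $M_{\n_1}(C,\Phi)$ is a lowest weight module generated by the lowest weight vector $1$, of lowest weight $C\Lambda_1$, and it is isomorphic to the restricted dual of a parabolic Verma module induced from $\mathfrak{p}_{\alpha_1}$ (equivalently a generalized Verma module for the opposite parabolic).

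\textbf{Step 1: locate the singular vectors.} Since $M_{\n_1}(C,\Phi)\cong\C[\n_1]$ is the polynomial ring on $2l-2$ variables, every nonzero submodule contains a lowest weight vector. So the first task is to find all $f$ annihilated by $f_{\alpha_1},\dots,f_{\alpha_{l-1}},f_{\alpha'_l}$.

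The operators $f_{\alpha_i}$ for $2\le i\le l-1$ and $f_{\alpha'_l}$ act by the vector fields $\widetilde X_{i+1,i}$ and $\widetilde Z_{l,l-1}$. These generate the natural action of the Levi factor $\so_{2l-2}$ on the space spanned by $X_{1,j},Y_{1,j}$, which is the $(2l-2)$-dimensional vector representation. The polynomials killed by all lowering operators of $\so_{2l-2}$ are its lowest weight vectors in $S(\C^{2l-2})$. By the classical harmonic decomposition $S^k=\mathcal H^k\oplus Q\,S^{k-2}$, with invariant quadratic form
$$Q=\sum_{i=2}^l X_{1,i}Y_{1,i},$$
these are exactly the monomials $X_{1,2}^{a}Q^{b}$ up to scalars. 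Here $X_{1,2}$ is the extremal weight vector of the vector representation, and one should check that it is the lowest vector for the convention on the Borel used here.

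\textbf{Step 2: impose $f_{\alpha_1}$.} Next apply $f_{\alpha_1}$, acting as $\tfrac12(\widetilde X_{2,1}-2C\,\partial/\partial X_{1,2})$, to $f=X_{1,2}^aQ^b$. Since $f_{\alpha_1}$ lowers the degree by one, the result should be a combination of $X_{1,2}^{a-1}Q^b$ and $X_{1,2}^{a+1}Q^{b-1}$ with coefficients linear in $C$. The target is to show that these coefficients are, up to nonzero factors, of the form $a(a+b-1+C)$ and $b(\,b+C-(l-1)\,)$, adapted from the type B formula $2(-s_2+1-C)$. This forces a nonconstant singular vector to be either $X_{1,2}^{1-C}$, which requires $C\in-\mathbb N$, or $Q^{l-1-C}$, which requires $C\in l-2-\mathbb N$.

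This computation, in particular getting the exact constant $l-1$ correctly via the Euler-operator part $\widetilde X_{1,1}\partial/\partial X_{1,2}$ acting on $Q^b$, is the main technical obstacle. We would do it by computing $\widetilde X_{2,1}(Q^b)$ directly using $\partial Q/\partial X_{1,2}=Y_{1,2}$ and $\sum_j$ contractions, which produce the dimension-dependent term. The simplicity criterion $C\notin l-2-\mathbb N$ follows at once, since $-\mathbb N\subset l-2-\mathbb N$.

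\textbf{Step 3: submodule structure.} For $0<C\le l-2$ with $C\in l-2-\mathbb N$, only $Q^{l-1-C}$ occurs. We then need to show that $\UU(\so_{2l})Q^{l-1-C}$ is the unique proper submodule. The plan is to prove that any nonzero submodule contains a singular vector, and that every proper submodule misses $1$ (otherwise it is everything). A submodule generated by singular vectors is then determined by which singular vectors it contains, so each proper submodule equals $\UU(\so_{2l})Q^{l-1-C}$.

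For $C\in-\mathbb N$ both singular vectors exist. We would check that neither generated submodule contains the other, by comparing weights or by noting that a lowest weight module generated by one of them has a one-dimensional space of vectors of the other's weight that is not singular. Their sum is then the maximal proper submodule, because the quotient by it is the simple module of lowest weight $C\Lambda_1$; for $C\in-\mathbb N$ this quotient is finite-dimensional. The two displayed composition series follow, provided the factors are simple. For that we would use the lowest-weight-module argument again: each $\UU(\so_{2l})f$ is itself a lowest weight module whose own singular vectors are among the list above, so it has no further proper submodules beyond those already exhibited.
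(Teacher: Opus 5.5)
Your Steps 1 and 2 follow the paper's route (your $Q$ is the paper's $S$). The paper reaches $f\in\mathbb{C}[X_{1,2},S]$ by eliminating variables by hand with $\widetilde X_{i,j},\widetilde Z_{i,j}$. Your harmonic decomposition is a cleaner way to the same point.

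Your guessed output of $f_{\alpha_1}$ is off, however. The correct identity, equivalent to the paper's two equations, is
\[
f_{\alpha_1}\cdot X_{1,2}^{a}Q^{b}=-a(a+2b-1+C)\,X_{1,2}^{a-1}Q^{b}+b(l-1-b-C)\,X_{1,2}^{a}Y_{1,2}Q^{b-1}.
\]
The second monomial must contain $Y_{1,2}$ for weight reasons. You also still have to exclude the case $a,b\ge 1$; it would force $a=C+3-2l\le 1-l$. With these corrections the list of singular vectors and the simplicity criterion come out as in the paper.

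\textbf{Main gap.} The step that fails is in Step 3 for $C\in-\mathbb{N}$. It is not true that neither of $\mathcal{U}(\mathfrak{so}_{2l})X_{1,2}^{1-C}$ and $\mathcal{U}(\mathfrak{so}_{2l})Q^{l-1-C}$ contains the other. Weights do give $X_{1,2}^{1-C}\notin\mathcal{U}(\mathfrak{so}_{2l})Q^{l-1-C}$, but the reverse containment holds.

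To see this, recall that $e_{\alpha_1}$ acts as multiplication by $X_{1,2}$. In $\bar M=M/\mathcal{U}(\mathfrak{so}_{2l})X_{1,2}^{1-C}$ we therefore have $e_{\alpha_1}^{1-C}\bar 1=0$. So $\bar M$ is integrable, hence finite-dimensional and simple. Thus $\mathcal{U}(\mathfrak{so}_{2l})X_{1,2}^{1-C}$ is the unique maximal submodule, and it contains every proper submodule, in particular $Q^{l-1-C}$. For $C=0$ you can see this directly: the Levi factor moves $X_{1,2}$ onto every linear form, and multiplication by $\mathfrak{n}_1$ then gives the whole augmentation ideal.

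Consequently:
\begin{itemize}
\item $\mathcal{U}(\mathfrak{so}_{2l})\{X_{1,2}^{1-C},Q^{l-1-C}\}=\mathcal{U}(\mathfrak{so}_{2l})X_{1,2}^{1-C}$, and this module is not simple;
\item the first displayed chain is not a composition series;
\item the true structure is the single chain $0\subset\mathcal{U}(\mathfrak{so}_{2l})Q^{l-1-C}\subset\mathcal{U}(\mathfrak{so}_{2l})X_{1,2}^{1-C}\subset M$.
\end{itemize}
Your proposed weight-space test would expose this rather than rule it out. The weight space of $Q^{l-1-C}$ is not one-dimensional, and inside $\mathcal{U}(\mathfrak{so}_{2l})X_{1,2}^{1-C}$ it contains the singular vector $Q^{l-1-C}$ itself. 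The paper's proof stops after listing singular vectors and never examines containments. So this clause of the statement is wrong as printed, and your plan inherits the error instead of detecting it.

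\textbf{Second gap (shared with the paper).} The principle ``a submodule generated by singular vectors is determined by which singular vectors it contains'' only shows that $\mathcal{U}(\mathfrak{so}_{2l})Q^{l-1-C}$ is simple and lies in every nonzero submodule. It does not show that every submodule is generated by singular vectors of $M$. Concretely, it does not show that $M/\mathcal{U}(\mathfrak{so}_{2l})Q^{l-1-C}$ (for $0<C\le l-2$) or $\mathcal{U}(\mathfrak{so}_{2l})X_{1,2}^{1-C}/\mathcal{U}(\mathfrak{so}_{2l})Q^{l-1-C}$ is simple, because singular vectors of a quotient need not lift to $M$.

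Your own tools repair this:
\begin{itemize}
\item $Q$ is Levi-invariant, so $\mathcal{U}(\mathfrak{so}_{2l})Q^{b}=Q^{b}\,\mathbb{C}[\mathfrak{n}_1]$.
\item The Levi-lowest vectors of $\mathbb{C}[\mathfrak{n}_1]/(Q^{b})$ are the images of $X_{1,2}^{a}Q^{c}$ with $c<b$.
\item For $b=l-1-C$, the displayed formula shows that the only such vectors killed by $f_{\alpha_1}$ modulo $(Q^{b})$ are $\bar 1$ and, when $C\in-\mathbb{N}$, the image of $X_{1,2}^{1-C}$.
\end{itemize}

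\textbf{Minor point.} $M$ is generated by $1$ and free over $\mathcal{U}(\mathfrak{n}_1)$, so it is the generalized Verma module for the opposite parabolic. It is not a restricted dual of a parabolic Verma module; the two differ exactly when $M$ is reducible.
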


\begin{proof} 
We can only consider $\Phi\in \mathbb{C}$, in which case $M_{\mathfrak{n}_1}(C,\Phi)$ is a lowest weight module with lowest weight vector $1$. Let $f$ be a singular vector of $M_{\mathfrak{n}_1}(C,\Phi)$. For $2\le j< i\le l$, $X_{i,j}(f)= Z_{i,j}(f)= 0$ implies that $\widetilde{X}_{i,j}(f)= \widetilde{Z}_{i,j}(f)= 0$, and then
$$(X_{1,i}\widetilde{X}_{i,j}- Y_{1,i}\widetilde{Z}_{i,j})(f)= X_{1,j}(Y_{1,i}\frac{\partial}{\partial Y_{1,i}}- X_{1,i}\frac{\partial}{\partial X_{1,i}})(f)= 0,$$
which forces $f\in \mathbb{C}[X_{1,2}, Y_{1,2}, X_{1,i}Y_{1,i}|3\leq i\leq l]$. For $3\le j< i\le l$,
$$\widetilde{Z}_{i,j}(f)= X_{1,i}X_{1,j}(\frac{\partial}{\partial (X_{1,j}Y_{1,j})}- \frac{\partial}{\partial (X_{1,i}Y_{1,i})})(f)= 0$$
and
$$\widetilde{Z}_{3,2}(f)= X_{1,3}(\frac{\partial}{\partial Y_{1,2}}- X_{1,2}\frac{\partial}{\partial (X_{1,3}Y_{1,3})})(f)= 0$$
imply that
$$\frac{\partial f}{\partial Y_{1,2}}= X_{1,2}\frac{\partial f}{\partial (X_{1,i}Y_{1,i})}, 3\leq i\leq l,$$
which forces $f\in \mathbb{C}[\overline{X}_{1,2}, S| \overline{X}_{1,2}= X_{1,2}, S= \sum_{i=2}^l X_{1,i}Y_{1,i}]$.

Furthermore, $X_{2,1}(f)= 0$ implies that
$$\begin{aligned}
&(\widetilde{X}_{2,1}- 2C\frac{\partial}{\partial X_{1,2}})(f)\\
= &-2\overline{X}_{1,2}\frac{\partial^2 f}{\partial \overline{X}_{1,2}^2}- 4S\frac{\partial^2 f}{\partial \overline{X}_{1,2}\partial S}- 2C\frac{\partial f}{\partial \overline{X}_{1,2}}+ 2Y_{1,2}((l-2-C)\frac{\partial f}{\partial S}- S\frac{\partial^2 f}{\partial S^2})= 0.
\end{aligned}$$
Since $X_{1,2}, Y_{1,2}, S$ are algebraically independent, we have
$$(l-2-C)\frac{\partial f}{\partial S}- S\frac{\partial^2 f}{\partial S^2}= -\overline{X}_{1,2}\frac{\partial^2 f}{\partial \overline{X}_{1,2}^2}- 2S\frac{\partial^2 f}{\partial \overline{X}_{1,2}\partial S}- C\frac{\partial f}{\partial \overline{X}_{1,2}}= 0.$$
Solving, we obtain 
$$f= c_0+ c_1X_{1,2}^{1-C}+ c_2 S^{l-1-C}, c_0, c_1, c_2\in \mathbb{C},$$
which forces $f\in \mathbb{C}^*$, or $f= c_1X_{1,2}^{1-C}, 1- C\geq 1$, or $f= c_2 S^{l-1-C}, l-1-C\geq 1$. The conclusion is proven.
\end{proof}

\subsection{Category $\mathcal{M}(\n_l)$} We will study the $\so_{2l}(\C)$-module category $\mathcal{M}(\n_l)$ whose objects are $\so_{2l}(\C)$-modules $\UU(\n_l)=\C[\n_l]$ that are free $\UU(\n_l)$-modules of rank $1$. We need to determine the actions of Chevalley generators on $\UU(\n_l)$. Let
$$\widetilde{X}'_{i,j}= \sum_{r<j}Y_{r,i}\frac{\partial}{\partial Y_{r,j}}+\sum_{s>j}Y_{i,s}\frac{\partial}{\partial Y_{j,s}}, \widetilde{Z}'_{l,l-1}= \sum_{k=1}^{l-1}\widetilde{X}'_{k,l-1}\frac{\partial}{\partial Y_{k,l}},  1\leq i,j\leq l.$$

\begin{lemma}\label{lem41}
Suppose that $f\in \C[\n_l]$, then
$$X_{i,j}\cdot f=(x'_{i,j}+\wX'_{i,j})f$$
and
$$Z_{l,l-1}\cdot f= (z'_{l,l-1}+ \sum_{k=1}^{l}x'_{k,l}\frac{\partial}{\partial Y_{k,l-1}}- \sum_{k=1}^{l}x'_{k,l-1}\frac{\partial}{\partial Y_{k,l}}+ \widetilde{Z}'_{l,l-1})f$$
where $x'_{i,j}= X_{i,j}\cdot 1, z'_{l,l-1}= Z_{l,l-1}\cdot 1, 1\leq i,j\leq l$.
\end{lemma}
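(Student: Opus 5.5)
The argument follows the proof of Lemma \ref{lem31}. The module is free of rank $1$ over the commutative algebra $\C[\n_l]$, so for $f\in\C[\n_l]$ and $x\in\so_{2l}(\C)$ we have
$$x\cdot f = f\,(x\cdot 1)+[x,f]\cdot 1 .$$
Everything therefore reduces to computing the commutators $[x,f]$ inside $\UU(\so_{2l})$ for $x=X_{i,j}$ and $x=Z_{l,l-1}$, with $f$ a monomial $\prod_{r<s}Y_{r,s}^{m_{r,s}}$. We then rewrite each commutator in the normal form (element of $\UU(\n_l)$)$\cdot$(element of $\so_{2l}$) plus an element of $\UU(\n_l)$, and apply it to $1$.

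\textbf{The $X_{i,j}$ case.} First I compute the brackets $[X_{i,j},Y_{r,s}]$ from the matrix realization. The result is $\delta_{j,r}Y_{i,s}+\delta_{j,s}Y_{r,i}$, using the convention $Y_{s,r}=-Y_{r,s}$ and $Y_{r,r}=0$. Since $\n_l$ is an abelian ideal of $\mathfrak{p}_{\alpha'_l}$, which contains all $X_{i,j}$, these brackets lie in $\n_l$. Hence $\mathrm{ad}\,X_{i,j}$ acts on $\C[\n_l]=S(\n_l)$ as a derivation, so $[X_{i,j},f]$ is just this derivation applied to $f$. Splitting the sum according to whether $j$ is the first or the second index of $Y_{r,s}$ gives exactly
$$\sum_{r<j}Y_{r,i}\frac{\partial f}{\partial Y_{r,j}}+\sum_{s>j}Y_{i,s}\frac{\partial f}{\partial Y_{j,s}}=\wX'_{i,j}(f).$$
Combined with $X_{i,j}\cdot 1=x'_{i,j}$, this yields the first formula.

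\textbf{The $Z_{l,l-1}$ case.} Here $[Z_{l,l-1},Y_{k,s}]$ lies in $\mathfrak{l}=\mathfrak{gl}_l$. Explicitly, it is $\pm X_{\ast,\ast}$, nonzero only when $\{k,s\}\cap\{l-1,l\}\neq\emptyset$. I compute
$$[Z_{l,l-1},Y_{k,l}]=\pm X_{k,l-1},\qquad [Z_{l,l-1},Y_{k,l-1}]=\mp X_{k,l},$$
with signs fixed by the matrix units. By induction on the degree of $f$ I would establish the second-order expansion
$$[Z_{l,l-1},f]=\sum_{k}\frac{\partial f}{\partial Y_{k,l-1}}X_{k,l}-\sum_k\frac{\partial f}{\partial Y_{k,l}}X_{k,l-1}+(\text{correction}),$$
where each $X_{k,\ast}$ is moved to the right past the remaining $Y$'s. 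The correction collects the commutators $[X_{k,l-1},Y_{\cdot,\cdot}]$ (and the analogous ones) produced by this reordering. These are again in $\n_l$ and are given by $\wX'$ applied to second derivatives. Applying everything to $1$ gives the first-order terms $\sum_k x'_{k,l}\partial/\partial Y_{k,l-1}-\sum_k x'_{k,l-1}\partial/\partial Y_{k,l}$ together with a second-order operator. I then have to match that second-order operator with $\widetilde Z'_{l,l-1}=\sum_{k}\wX'_{k,l-1}\,\partial/\partial Y_{k,l}$. Because the coefficient of $\frac12$ present in Lemma \ref{lem31} does not appear here, I expect the symmetric double sum to collapse with coefficient one, or the terms coming from $Y_{k,l-1}$ to cancel against the terms coming from $Y_{k,l}$.

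\textbf{Main obstacle.} The hard part is this last bookkeeping: fixing the signs $Y_{s,r}=-Y_{r,s}$ and checking that the reordering terms assemble exactly into $\widetilde Z'_{l,l-1}$, with no leftover $\frac12$-type terms. I would first verify the formula on degree-two monomials $Y_{a,l}Y_{b,l-1}$ and $Y_{a,l}Y_{b,l}$ to pin down the signs. After that, the derivation-like structure on monomials (a second-order Leibniz rule) extends the identity to all of $\C[\n_l]$.
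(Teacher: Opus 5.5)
Your route is the same as the paper's: write $x\cdot f=f\,(x\cdot 1)+[x,f]\cdot 1$, commute $x$ past monomials of $\UU(\n_l)$, put the Levi elements on the right, and apply to $1$. The $X_{i,j}$ part is complete. Indeed, $[X_{i,j},Y_{r,s}]=\delta_{j,r}Y_{i,s}+\delta_{j,s}Y_{r,i}\in\n_l$, so $\mathrm{ad}\,X_{i,j}$ acts on $\C[\n_l]$ as the derivation $\wX'_{i,j}$.

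The gap is the step you postpone for $Z_{l,l-1}$. You never compute the reordering terms; you only expect them to assemble into $\widetilde Z'_{l,l-1}$, and they do not. For a monomial $f$, with $a,b$ running over the basis $\{Y_{r,s}\}_{r<s}$ of $\n_l$, the reordering gives
\[
[Z_{l,l-1},f]=\sum_a\frac{\partial f}{\partial Y_a}\,[Z_{l,l-1},Y_a]+\frac12\sum_{a,b}\frac{\partial^2 f}{\partial Y_a\,\partial Y_b}\,\bigl[[Z_{l,l-1},Y_a],Y_b\bigr].
\]
Here $[[Z_{l,l-1},Y_a],Y_b]$ is symmetric in $a,b$ by the Jacobi identity and $[\n_l,\n_l]=0$. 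The signs you left open are $[Z_{l,l-1},Y_{k,l-1}]=X_{k,l}$ and $[Z_{l,l-1},Y_{k,l}]=-X_{k,l-1}$ for $k\le l-2$. In addition, $[Z_{l,l-1},Y_{l-1,l}]=-(X_{l-1,l-1}+X_{l,l})$, which is not a single $\pm X_{\ast,\ast}$. With these, the second-order operator is
\[
\sum_{k,k'=1}^{l-2}Y_{k',k}\frac{\partial^2}{\partial Y_{k,l-1}\,\partial Y_{k',l}}-\sum_{j=l-1}^{l}\sum_{i=1}^{l-2}Y_{i,j}\frac{\partial^2}{\partial Y_{i,j}\,\partial Y_{l-1,l}}-Y_{l-1,l}\frac{\partial^2}{\partial Y_{l-1,l}^2}.
\]
This is exactly the expression the paper's proof reaches just before its last equality. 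Expanding $\widetilde Z'_{l,l-1}=\sum_{k=1}^{l-1}\wX'_{k,l-1}\frac{\partial}{\partial Y_{k,l}}$ term by term shows that this operator equals $-\widetilde Z'_{l,l-1}$, not $+\widetilde Z'_{l,l-1}$.

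The quickest confirmation is $f=Y_{l-1,l}^2$. Take the $\mathfrak{sl}_2$-triple $e=Y_{l-1,l}$, $h=X_{l-1,l-1}+X_{l,l}$, $Z_{l,l-1}$. Then $[Z_{l,l-1},e^2]=-2eh-2e$, which is the paper's own formula for $[Z_{l,l-1},Y_{l-1,l}^m]$ at $m=2$. Hence $Z_{l,l-1}\cdot Y_{l-1,l}^2=z'_{l,l-1}Y_{l-1,l}^2-2Y_{l-1,l}(x'_{l-1,l-1}+x'_{l,l})-2Y_{l-1,l}$. The first-order terms of the stated formula reproduce the middle term, but $\widetilde Z'_{l,l-1}(Y_{l-1,l}^2)=\wX'_{l-1,l-1}(2Y_{l-1,l})=+2Y_{l-1,l}$.

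Likewise, for $a\ne b\le l-2$ the true correction for $Y_{a,l}Y_{b,l-1}$ is $Y_{a,b}$, whereas $\widetilde Z'_{l,l-1}(Y_{a,l}Y_{b,l-1})=Y_{b,a}$. Your other test monomial, $Y_{a,l}Y_{b,l}$, gives $0$ on both sides and detects nothing.

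So the bookkeeping you describe cannot be completed: the formula as stated is off by a sign. The same sign slip sits in the last equality of the paper's proof, so following the paper will not close the gap either. What your method actually proves is
\[
Z_{l,l-1}\cdot f=\Bigl(z'_{l,l-1}+\sum_{k=1}^{l}x'_{k,l}\frac{\partial}{\partial Y_{k,l-1}}-\sum_{k=1}^{l}x'_{k,l-1}\frac{\partial}{\partial Y_{k,l}}-\widetilde Z'_{l,l-1}\Bigr)f .
\]
Also, the contributions attached to $Y_{k,l-1}$ and to $Y_{k,l}$ do not cancel, as your alternative expectation suggests. The correction equals $\frac12\bigl(\sum_{k=1}^{l}\wX'_{k,l}\frac{\partial}{\partial Y_{k,l-1}}-\widetilde Z'_{l,l-1}\bigr)$, and the two pieces are equal. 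That is why no $\frac12$ survives.
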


\begin{proof}
We only consider the action of $Z_{l,l-1}$. From
$$[Z_{l,l-1}, Y_{l-1,l}^{m_{l-1,l}}]= -m_{l-1,l}Y_{l-1,l}^{m_{l-1,l}-1}(X_{l-1,l-1}+ X_{l,l})- m_{l-1,l}(m_{l-1,l}-1)Y_{l-1,l}^{m_{l-1,l}-1},$$
and
$$[Z_{l,l-1}, Y_{j,l-1}^{m_{j,l-1}}]= m_{j,l-1}Y_{j,l-1}^{m_{j,l-1}-1}X_{j,l}, [Z_{l,l-1}, Y_{j,l}^{m_{j,l}}]= -m_{j,l}Y_{j,l}^{m_{j,l}-1}X_{j,l-1}, 1\leq j\leq l-2,$$
we have
$$[Z_{l,l-1}, \prod_{1\leq i< j\leq l} Y_{i,j}^{m_{i,j}}]= \prod_{1\leq i< j\leq l-2} Y_{i,j}^{m_{i,j}}[Z_{l,l-1}, Y_{l-1,l}^{m_{l-1,l}}]\prod_{j=l-1}^{l}\prod_{i=1}^{l-2} Y_{i,j}^{m_{i,j}}$$
$$+ \prod_{1\leq i< j\leq l-2} Y_{i,j}^{m_{i,j}}Y_{l-1,l}^{m_{l-1,l}}[Z_{l,l-1}, \prod_{i=1}^{l-2} Y_{i,l-1}^{m_{i,l-1}}]\prod_{i=1}^{l-2} Y_{i,l}^{m_{i,l}}$$
$$+ \prod_{1\leq i< j\leq l-2} Y_{i,j}^{m_{i,j}}Y_{l-1,l}^{m_{l-1,l}}\prod_{i=1}^{l-2} Y_{i,l-1}^{m_{i,l-1}}[Z_{l,l-1}, \prod_{i=1}^{l-2} Y_{i,l}^{m_{i,l}}]$$
$$= -m_{l-1,l}\prod_{1\leq i< j\leq l} Y_{i,j}^{{m_{i,j}}-\delta_{i,l-1}}(X_{l-1,l-1}+ X_{l,l})- m_{l-1,l}\sum_{j=l-1}^l\sum_{i=1}^{l-2}m_{i,j}\prod_{1\leq i< j\leq l} Y_{i,j}^{{m_{i,j}}-\delta_{i,l-1}}$$
$$- m_{l-1,l}(m_{l-1,l}-1)\prod_{1\leq i< j\leq l} Y_{i,j}^{{m_{i,j}}-\delta_{i,l-1}}$$
$$+ \sum_{k=1}^{l-2}m_{k,l-1}\prod_{1\leq i< j\leq l-2} Y_{i,j}^{m_{i,j}}Y_{l-1,l}^{m_{l-1,l}}\prod_{i=1}^{l-2} Y_{i,l-1}^{m_{i,l-1}- \delta_{i,k}}\prod_{i=1}^{l-2} Y_{i,l}^{m_{i,l}}X_{k,l}$$
$$+ \sum_{k=1}^{l-2}m_{k,l-1}\sum_{k'=1}^{l-2}m_{k',l}\prod_{1\leq i< j\leq l-2} Y_{i,j}^{m_{i,j}}Y_{l-1,l}^{m_{l-1,l}}\prod_{i=1}^{l-2} Y_{i,l-1}^{m_{i,l-1}- \delta_{i,k}}\prod_{i=1}^{l-2} Y_{i,l}^{m_{i,l}-\delta_{i.k'}}Y_{k',k}$$
$$- \sum_{k=1}^{l-2}m_{k,l}\prod_{1\leq i< j\leq l-2} Y_{i,j}^{m_{i,j}}Y_{l-1,l}^{m_{l-1,l}}\prod_{i=1}^{l-2} Y_{i,l-1}^{m_{i,l-1}}\prod_{i=1}^{l-2} Y_{i,l}^{m_{i,l}- \delta_{i,k}}X_{k,l-1}$$
Therefore
$$Z_{l,l-1}\cdot f= fZ_{l,l-1}\cdot 1+ [Z_{l,l-1}, f]\cdot 1$$
$$= z'_{l,l-1}f+ \Big(-(x'_{l-1,l-1}+ x'_{l,l})\frac{\partial}{\partial Y_{l-1,l}}- \sum_{j=l-1}^l\sum_{i=1}^{l-2} Y_{i,j}\frac{\partial^2}{\partial Y_{i,j}\partial Y_{l-1,l}}$$
$$- Y_{l-1,l}\frac{\partial^2}{\partial Y_{l-1,l}^2}+ \sum_{k=1}^{l-2}x'_{k,l}\frac{\partial}{\partial Y_{k,l-1}}+ \sum_{k=1}^{l-2}\sum_{k'=1}^{l-2}Y_{k',k}\frac{\partial^2}{\partial Y_{k,l-1}\partial Y_{k',l}}- \sum_{k=1}^{l-2}x'_{k,l-1}\frac{\partial}{\partial Y_{k,l}}\Big)f$$
$$= (z'_{l,l-1}+ \sum_{k=1}^{l}x'_{k,l}\frac{\partial}{\partial Y_{k,l-1}}- \sum_{k=1}^{l}x'_{k,l-1}\frac{\partial}{\partial Y_{k,l}}+ \widetilde{Z}'_{l,l-1})f.$$
\end{proof}

Similar to Lemma \ref{lem32}, we have
\begin{lemma}\label{lem42}
Let $x'_{i,j}, z'_{l,l-1}$ be as in Lemma \ref{lem41}. Then
$$x'_{i,j}= \wX'_{i,j}(\Psi)+\delta_{i,j}C,$$
$$z'_{l,l-1}= (\frac{1}{2}(\sum_{k=1}^{l}\widetilde{X}'_{k,l}(\Psi)\frac{\partial}{\partial Y_{k,l-1}}- \sum_{k=1}^{l}\widetilde{X}'_{k,l-1}(\Psi)\frac{\partial}{\partial Y_{k,l}})- 2C\frac{\partial}{\partial Y_{l-1,l}}+ \widetilde{Z}'_{l,l-1})(\Psi)$$
for some $\Psi\in \C[\n_l]$ and some $C\in \C$.
\end{lemma}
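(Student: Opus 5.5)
We follow the proof of Lemma \ref{lem32}, with the Levi subalgebra now spanned by the $X_{i,j}$, $1\leq i,j\leq l$, which is a copy of $\mathfrak{gl}_l$. The idea is that the cocycle conditions from commutation relations force the functions $x'_{i,j}=X_{i,j}\cdot 1$ to be the differential operators $\widetilde{X}'_{i,j}$ applied to one potential $\Psi$, up to a scalar in the centre. After that, $z'_{l,l-1}$ is fixed by a single eigenvalue equation.

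\emph{Step 1: the Cartan part.} The operator $\widetilde{X}'_{i,i}$ acts diagonally on monomials $\prod Y_{r,s}^{m_{r,s}}$, with eigenvalue the total degree in the variables carrying index $i$. The operator $E=\sum_i \widetilde{X}'_{i,i}$ is twice the Euler operator, so it is invertible on the nonconstant polynomials. Hence we can write
$$\sum_i x'_{i,i}=E(\Psi)+lC$$
for some $\Psi$ and some constant $C$. The relations $[X_{i,i},X_{j,j}]=0$ give
$$\widetilde{X}'_{i,i}(x'_{j,j})=\widetilde{X}'_{j,j}(x'_{i,i}).$$
Comparing coefficients monomial by monomial then yields $x'_{i,i}=\widetilde{X}'_{i,i}(\Psi)+C_i$ with constants $C_i$.

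\emph{Step 2: off-diagonal entries.} For $i\neq j$ we use $[X_{i,i}-X_{j,j},X_{i,j}]\cdot 1=2X_{i,j}\cdot 1$, which reads
$$(\widetilde{X}'_{i,i}-\widetilde{X}'_{j,j}-2)\,x'_{i,j}=\widetilde{X}'_{i,j}(x'_{i,i}-x'_{j,j}).$$
The weight argument from Lemma \ref{lem32} shows that this has the unique solution $x'_{i,j}=\widetilde{X}'_{i,j}(\Psi)$. It is better to use the weight of the Cartan element $X_{i,i}$ alone, so that uniqueness holds on each weight space: the operator $\widetilde{X}'_{i,i}-\widetilde{X}'_{j,j}-2$ could have a kernel, and this must be checked by comparing $\mathfrak{h}$-weights of monomials. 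Next, $[X_{i,j},X_{j,i}]=X_{i,i}-X_{j,j}$ together with the identity
$$[\widetilde{X}'_{i,j},\widetilde{X}'_{j,i}]=\widetilde{X}'_{i,i}-\widetilde{X}'_{j,j}$$
gives $C_i=C_j$. So all the $C_i$ equal a common $C$, which we may absorb in the normalisation of Step 1.

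\emph{Step 3: the element $z'_{l,l-1}$.} We use $[X_{l,l}+X_{l-1,l-1},Z_{l,l-1}]=-Z_{l,l-1}$ applied to $1$, together with Lemma \ref{lem41}. This gives
$$(\widetilde{X}'_{l-1,l-1}+\widetilde{X}'_{l,l}+1)\,z'_{l,l-1}=R,$$
where $R$ is the explicit right-hand side obtained by applying the operator of Lemma \ref{lem41} to $x'_{l-1,l-1}+x'_{l,l}$. The operator on the left is invertible on $\C[\n_l]$, because its eigenvalues are all at least $1$. So the equation has a unique solution. It remains to check that the stated formula satisfies it, using the commutation relations of the $\widetilde{X}'$ operators with $\partial/\partial Y_{k,l}$ and with $\widetilde{Z}'_{l,l-1}$. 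In this check the $C$-terms produce the $-2C\,\partial/\partial Y_{l-1,l}$ contribution.

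\textbf{Main obstacle.} I expect this final verification to be the main obstacle. It requires computing $[\widetilde{X}'_{l-1,l-1}+\widetilde{X}'_{l,l},\widetilde{Z}'_{l,l-1}]=-\widetilde{Z}'_{l,l-1}$ and the analogous commutators for the first-order terms, while tracking the factor $\frac{1}{2}$, which comes from symmetrisation in the second-order part. I would organise this check by writing everything in terms of $\Psi$ and comparing homogeneous components of each weight.
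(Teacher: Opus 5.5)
Your plan is the paper's own: the paper proves this lemma only by saying it is ``similar to Lemma \ref{lem32}'', and your Steps 1--3 apply that template to the Levi $\mathfrak{gl}_l$. Step 1 is correct. So is your use of $[X_{i,j},X_{j,i}]$ to make all the $C_i$ equal to one central constant (unlike type B, where they vanish). However, two steps fail as written, and the verification you leave open is the actual content.

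\textbf{Step 2.} Uniqueness does not follow from either Cartan element you name. For $l\geq 3$ the operator $\widetilde{X}'_{i,i}-\widetilde{X}'_{j,j}-2$ kills $Y_{i,k}^2$ for $k\notin\{i,j\}$. (In type B the analogous operator was invertible only because $\widetilde{X}_{1,1}$ there is the Euler operator.) Your replacement $\widetilde{X}'_{i,i}-1$ also has a kernel: it kills $Y_{i,k}$. Use $X_{j,j}$ instead. The relation $[X_{j,j},X_{i,j}]=-X_{i,j}$ gives $(\widetilde{X}'_{j,j}+1)x'_{i,j}=\widetilde{X}'_{i,j}(x'_{j,j})$. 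Here $\widetilde{X}'_{j,j}+1$ is invertible, since $\widetilde{X}'_{j,j}$ has eigenvalues in $\mathbb{N}$. Moreover $\widetilde{X}'_{i,j}(\Psi)$ solves this equation, because $X_{i,j}\mapsto\widetilde{X}'_{i,j}$ is just $\mathrm{ad}\,X_{i,j}$ on $S(\mathfrak{n}_l)$ and hence a Lie homomorphism.

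\textbf{Step 3.} The root of $Z_{l,l-1}$ is $-(\epsilon_{l-1}+\epsilon_l)$, so $[X_{l-1,l-1}+X_{l,l},Z_{l,l-1}]=-2Z_{l,l-1}$. Likewise $[\widetilde{H},\widetilde{Z}'_{l,l-1}]=-2\widetilde{Z}'_{l,l-1}$, where $\widetilde{H}=\widetilde{X}'_{l-1,l-1}+\widetilde{X}'_{l,l}$. The correct equation is $(\widetilde{H}+2)z'_{l,l-1}=D'(\widetilde{H}(\Psi))$, where $D'$ is the operator of Lemma \ref{lem41} without the $z'_{l,l-1}$ term. With your $+1$, the stated formula is \emph{not} a solution, since a nonzero $z'$ cannot satisfy both equations.

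With the correct equation, the check you call the main obstacle is short. Set
$G(\Psi_1,\Psi_2)=\sum_k\widetilde{X}'_{k,l}(\Psi_1)\,\partial\Psi_2/\partial Y_{k,l-1}-\sum_k\widetilde{X}'_{k,l-1}(\Psi_1)\,\partial\Psi_2/\partial Y_{k,l}$ and $Q=-2C\,\partial/\partial Y_{l-1,l}+\widetilde{Z}'_{l,l-1}$. Then $D'(f)=G(\Psi,f)+Q(f)$, using $Y_{a,b}=-Y_{b,a}$, and the claimed value is $\tfrac12G(\Psi,\Psi)+Q(\Psi)$.
\begin{itemize}
\item The terms linear in $\Psi$ match, because $(\widetilde{H}+2)Q=Q\widetilde{H}$.
\item For the quadratic terms, $G$ is symmetric; this is exactly the operator identity the paper displays after the isomorphism criterion for $M_{\mathfrak{n}_l}(C,\Psi)$. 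Also $G$ lowers $\widetilde{H}$-eigenvalues by $2$. Write $\Psi=\sum_a\Psi_a$ with $\widetilde{H}\Psi_a=a\Psi_a$. Then both $(\widetilde{H}+2)\tfrac12G(\Psi,\Psi)$ and $G(\Psi,\widetilde{H}\Psi)$ equal $\tfrac12\sum_{a,b}(a+b)G(\Psi_a,\Psi_b)$.
\end{itemize}
Invertibility of $\widetilde{H}+2$ then gives uniqueness.

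\textbf{A caveat.} This Cartan equation cannot detect the sign in front of $\widetilde{Z}'_{l,l-1}$. So your argument, like the paper's, proves the formula only relative to Lemma \ref{lem41}. A direct check against $[Z_{l,l-1},Y_{l-1,l}]=-(X_{l-1,l-1}+X_{l,l})$ suggests that Lemma \ref{lem41}, and hence the last term of the stated formula, should carry $-\widetilde{Z}'_{l,l-1}$. For example, take $l=2$ and $\Psi=0$. There $Z_{2,1}\cdot Y_{1,2}^d=-d(d-1+2C)Y_{1,2}^{d-1}$, whereas Lemma \ref{lem41} gives $d(d-1-2C)Y_{1,2}^{d-1}$.
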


By checking other Serre relations, we can define $\mathfrak{so}_{2l}(\mathbb{C})$-modules as follows.
\begin{definition}
For any $C\in \mathbb{C}$ and $\Psi\in \mathbb{C}[\mathfrak{n}_l]$, we define the action of the Chevalley generators on $\mathbb{C}[\mathfrak{n}_l]$ as follows:
\begin{equation*}
\begin{cases}
e_{\alpha_i}\cdot f= (x'_{i,i+1}+ \widetilde{X}'_{i,i+1})f, f_{\alpha_i}\cdot f= (x'_{i+1,i}+ \widetilde{X}'_{i+1,i})f,\\
h_{\alpha_i}\cdot f= ((x'_{i,i}- x'_{i+1,i+1})+ \wX'_{i,i}- \wX'_{i+1,i+1})f, 1\leq i\leq l-1,&\\
e_{\alpha'_l}\cdot f= Y_{l-1,l}f, h_{\alpha'_l}\cdot f= (x'_{l-1,l-1}+ x'_{l,l}+ \wX'_{l-1,l-1}+\wX'_{l,l})f,&\\
f_{\alpha'_l}\cdot f= (z'_{l,l-1}+ \sum_{k=1}^{l}\wX'_{k,l}(\Psi)\frac{\partial}{\partial Y_{k,l-1}}- \sum\limits_{k=1}^{l}\wX'_{k,l-1}(\Psi)\frac{\partial}{\partial Y_{k,l}}- 2C\frac{\partial}{\partial Y_{l-1,l}}+ \widetilde{Z}'_{l,l-1})f,&\\
\end{cases}
\end{equation*}
where $f\in \mathbb{C}[\mathfrak{n}_l]$ and $x'_{i,j}, z'_{l,l-1}$ are given in Lemma \ref{lem42}. It makes $\mathbb{C}[\mathfrak{n}_l]$ a $\mathfrak{so}_{2l}(\mathbb{C})$-module denoted by $M_{\mathfrak{n}_l}(C,\Psi)$.
\end{definition}

Similar to Theorem \ref{thm34}, we have the following isomorphism criterion.
\begin{theorem} Suppose that $C_1, C_2\in\C$ and 
 $\psi_1,\psi_2\in\mathbb{C}[\mathfrak{n}_l]$. Then $M_{\mathfrak{n}_l}(C_1,\Psi_1)\cong M_{\mathfrak{n}_l}(C_2,\Psi_2)$ if and only if $C_1= C_2$ and $\Psi_1-\Psi_2\in \mathbb{C}$.
\end{theorem}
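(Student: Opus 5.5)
Like Theorem~\ref{thm34}, the proof splits into the two implications, and only the forward direction needs work.

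For the ``if'' direction, take $C_1=C_2$ and $\Psi_1-\Psi_2=c\in\C$. By Lemma~\ref{lem42}, every structure function $x'_{i,j}$ and $z'_{l,l-1}$ is obtained by applying a differential operator with no constant term to $\Psi$, plus terms depending only on $C$. Hence these functions do not change when $\Psi$ is shifted by a constant. The two actions then coincide by the Definition, so the identity map is an isomorphism.

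For the ``only if'' direction, let $\sigma: M_{\n_l}(C_1,\Psi_1)\to M_{\n_l}(C_2,\Psi_2)$ be an isomorphism of $\so_{2l}(\C)$-modules. Since each $Y_{i,j}\in\n_l$ acts on both modules by multiplication, $\sigma$ is a $\C[\n_l]$-module map. Therefore $\sigma(f)=f\sigma(1)$ for all $f$. Bijectivity forces $\sigma(1)$ to be a unit of $\C[\n_l]$, so $\sigma(1)\in\C^*$.

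Next compare the action of a Cartan element that acts diagonally with only positive eigenvalues on monomials. The natural candidate is $H:=\sum_{i=1}^l X_{i,i}$, which lies in $\mathfrak{h}$ and hence in $\so_{2l}(\C)$. By Lemma~\ref{lem41}, $H\cdot f=(\sum_i x'_{i,i}+\sum_i\wX'_{i,i})f$. The operator $\wX'_{H}:=\sum_i \wX'_{i,i}$ acts on a monomial $\prod Y_{i,j}^{m_{i,j}}$ by the scalar $2\sum m_{i,j}$, since each $Y_{i,j}$ has weight $\epsilon_i+\epsilon_j$. By Lemma~\ref{lem42}, $\sum_i x'_{i,i}=\wX'_H(\Psi)+lC$.

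Evaluating $\sigma(H\cdot 1)=H\cdot\sigma(1)$ and dividing by the nonzero constant $\sigma(1)$ gives
$$\wX'_H(\Psi_1-\Psi_2)=l(C_2-C_1).$$
The left side has zero constant term, because $\wX'_H$ kills constants and scales every nonconstant monomial by a positive integer. Comparing constant terms yields $C_1=C_2$. Then $\wX'_H(\Psi_1-\Psi_2)=0$, and since $\wX'_H$ is injective on polynomials without constant term, $\Psi_1-\Psi_2\in\C$.

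The only step requiring care is to confirm, from the definitions of $\wX'_{i,i}$ in Lemma~\ref{lem41}, that $\sum_i\wX'_{i,i}$ really is twice the degree operator. If it is not, I would fall back to $X_{1,1}$ alone, or to $h_{\alpha'_l}$ combined with the other $h_{\alpha_i}$, to obtain an Euler-type operator with strictly positive eigenvalues on nonconstant monomials.
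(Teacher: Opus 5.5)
Your proposal is correct and is essentially the paper's argument, which the paper gives only as ``similar to Theorem~\ref{thm34}''. As there, $\sigma(f)=f\sigma(1)$ with $\sigma(1)\in\C^*$, and one then compares the action of a Cartan element whose operator is an Euler operator; your $H=\sum_i X_{i,i}$ (twice the grading element of $\mathfrak{p}_{\alpha'_l}$, since $[X_{k,k},Y_{i,j}]=(\delta_{k,i}+\delta_{k,j})Y_{i,j}$) is the right analogue of $X_{1,1}$ in type B. Drop the fallback to $X_{1,1}$ alone, though: here $\wX'_{1,1}=\sum_{s>1}Y_{1,s}\partial/\partial Y_{1,s}$ annihilates every $Y_{i,j}$ with $i,j\geq 2$, so it cannot detect $\Psi_1-\Psi_2$ in general.
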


By the equality 
$$\sum_{k=1}^{l}\widetilde{X}'_{k,l}(\Psi)\frac{\partial}{\partial Y_{k,l-1}}- \sum_{k=1}^{l}\widetilde{X}'_{k,l-1}(\Psi)\frac{\partial}{\partial Y_{k,l}}= \sum_{k=1}^{l}\frac{\partial \Psi}{\partial Y_{k,l-1}}\widetilde{X}'_{k,l}- \sum_{k=1}^{l}\frac{\partial \Psi}{\partial Y_{k,l}}\widetilde{X}'_{k,l-1},$$
$W$ is a submodule of $M_{\mathfrak{n}_l}(C,\Psi)$ if and only if $W$ is an ideal of $M_{\mathfrak{n}_l}(C,\Psi)$ and the elements $\widetilde{X}'_{i,j}, \widetilde{Z}'_{l,l-1}- 2C\partial/\partial Y_{l-1,l}$ map $W$ into $W$ itself. The simplicity of $M_{\mathfrak{n}_l}(C,\Psi)$ depends on $C\in\C$ unrelated to $\Psi\in\C[\n_l]$. As for $\Psi$, we have that $M_{\mathfrak{n}_l}(C,\Psi)$ is a weight module if and only if $\Psi\in\C$. If $\Psi\in\C$, then $M_{\mathfrak{n}_l}(C,\Psi)$ is a lowest weight module with the lowest weight $2C\Lambda_l$, where $\Lambda_l$ is the $l$-th fundamental weight.

Let
$$P_r= \text{Pfaffian}(Y_{i,j}), l-2r+1\leq i,j\leq l, 1\leq r\leq \lfloor \frac{l}{2}\rfloor.$$
It is easy to see that
\begin{equation*}
\widetilde{X}'_{i,j}(P_r)=\begin{cases}
P_r, &l-2r+1\leq i= j\leq l;\\
0, &l-2r+1\leq i\neq j\leq l \text{ or } j< l-2r+1.
\end{cases}
\end{equation*}

\begin{theorem}
If $\Psi\in\C[\n_l]$ and  $C\in\C$, then the $\so_{2l}(\C)$-module $M_{\mathfrak{n}_l}(C,\Psi)$ is simple if and only if $C\notin -\lfloor l/2\rfloor+ 1/2+ (1/2)\Z_+$. If $C\in -n+ 1/2+ (1/2)\Z_+, n\leq \lfloor l/2\rfloor, n\in \mathbb{Z}_+$, then all submodules are generated from some of $P_r^{2r- 1+ 2C}, n\leq r\leq \lfloor l/2\rfloor$. 
\end{theorem}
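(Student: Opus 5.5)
We follow the pattern of the proofs of Theorem \ref{thm35} and its type $D$ analogue for $\n_1$. By the remark before the statement, submodules of $M_{\mathfrak{n}_l}(C,\Psi)$ are exactly the ideals of $\C[\n_l]$ stable under $\widetilde{X}'_{i,j}$ and $\widetilde{Z}'_{l,l-1}-2C\partial/\partial Y_{l-1,l}$. This condition does not involve $\Psi$, so we may assume $\Psi\in\C$. Then $M_{\mathfrak{n}_l}(C,\Psi)$ is a lowest weight module with lowest weight vector $1$ and lowest weight $2C\Lambda_l$. Since the $\widetilde{X}'_{i,j}$ act by the Levi $\mathfrak{gl}_l$, every submodule is $\mathfrak{h}$-graded and is generated by its singular vectors, meaning vectors killed by all $f_{\alpha_i}$ and $f_{\alpha'_l}$. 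The plan is:
\begin{enumerate}[(1)]
\item classify the singular vectors;
\item decide for which $C$ a nonconstant one exists;
\item describe the resulting submodules.
\end{enumerate}

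\textbf{Step 1: reduce to Pfaffian monomials.} The conditions $f_{\alpha_i}f=0$ for $1\le i\le l-1$ say $\widetilde{X}'_{i+1,i}(f)=0$. So $f$ is a lowest weight vector for $\mathfrak{gl}_l$ acting on $\C[\n_l]\cong\C[\wedge^2\C^l]=S(\wedge^2)$. We use the classical multiplicity-free decomposition of $S(\wedge^2\C^l)$ (Howe duality / Schur). Its $\mathfrak{gl}_l$-extremal vectors for the lower triangular part, taken with respect to the variables $Y_{i,j}$ with $i<j$, are the monomials $\prod_r P_r^{a_r}$ in the bottom-right Pfaffians $P_r$ for $1\le r\le\lfloor l/2\rfloor$. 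We verify the needed annihilation directly from the displayed identity $\widetilde{X}'_{i,j}(P_r)=0$ for $i\ne j$ in the relevant range. Multiplicity-freeness then gives that $f$ is a linear combination of such monomials with the same weight, hence a single monomial $f=c\prod_r P_r^{a_r}$ with $a_r\in\mathbb{N}$.

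\textbf{Step 2: apply $f_{\alpha'_l}$.} We compute $(\widetilde{Z}'_{l,l-1}-2C\,\partial/\partial Y_{l-1,l})\bigl(\prod_r P_r^{a_r}\bigr)$, a second-order operator. The key sub-computation is its action on one $P_r^{a}$. Expanding $P_r$ along the pair $(l-1,l)$ gives $\partial P_r/\partial Y_{l-1,l}=P_{r-1}'$, a Pfaffian of the complementary indices. Using the Pfaffian Laplace identities, we expect
\[
\widetilde{Z}'_{l,l-1}(P_r^{a})=-a(a+2r-2)\,P_r^{a-1}\,\partial_{Y_{l-1,l}}P_r .
\]
The exact constant has to be pinned down by a careful computation; its form is suggested by the $\n_1$ case and by the $\wX'_{k,k}$-eigenvalues. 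For products of powers, the cross terms $\partial P_r\,\partial P_s$ with $r\neq s$ must be handled, and we would show they vanish or are independent. Setting the result to zero gives, for each $r$ with $a_r>0$, the condition
\[
a_r+2r-2+2C=-1,\quad\text{equivalently}\quad a_r=1-2r-2C \text{ up to sign convention},
\]
together with the exclusion of mixed products.

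Sign conventions matter here, since the lowest weight is $2C\Lambda_l$. We will match the outcome against the statement's exponent $2r-1+2C$: the singular vectors are the constants and the $P_r^{2r-1+2C}$ with $2r-1+2C\in\mathbb{Z}_+$. That exponent is a positive integer for some $r\le\lfloor l/2\rfloor$ exactly when $C\in -\lfloor l/2\rfloor+1/2+(1/2)\Z_+$, which yields the simplicity criterion. If $C\in -n+1/2+(1/2)\Z_+$ with $n$ minimal, the admissible $r$ are exactly $n\le r\le\lfloor l/2\rfloor$.

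\textbf{Step 3: describe the submodules.} Every submodule is generated by the singular vectors it contains, so all submodules are generated by subsets of $\{P_r^{2r-1+2C}\}$, as claimed.

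\textbf{Main obstacle.} The hardest part is the Pfaffian calculus in Step 2, namely the exact coefficient and the vanishing of the cross terms. We would first check it for $l=4,5$ and then generalize by induction on $r$, using the expansion $P_r=\sum \pm Y_{l-2r+1,j}P^{(j)}$.
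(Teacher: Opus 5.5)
Your overall architecture matches the paper's: reduce to $\Psi\in\C$, show a singular vector is a single monomial in the $P_r$, then impose $f_{\alpha'_l}f=0$. Your Step 1 is a legitimate alternative route. The paper reaches $f\in\C[P_1,\dots,P_{\lfloor l/2\rfloor}]$ by an elementary localization: for even $l$ it adjoins $P_{l/2-1}^{-1}$, changes variables, uses invertibility of $(Y_{i,j})_{3\le i,j\le l}$, and inducts. It then separates monomials by their weights. Citing the multiplicity-free decomposition of $S(\wedge^2\C^l)$ gets the same conclusion more cleanly.

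\textbf{The gap is Step 2.} It is the whole content of the theorem, and you only guess at it. The paper computes it exactly. It uses $\wX'_{k,l-1}(P_r)=0$ for $l-2r+1\le k\le l-2$, $\wX'_{l-1,l-1}(P_r)=P_r$, and the commutator $[\wX'_{k,l-1},\partial/\partial Y_{k,l}]=-\partial/\partial Y_{l-1,l}$ for each of the $2r-1$ relevant $k$. This gives $\widetilde{Z}'_{l,l-1}(P_r^{d})=d(d-2r+1)P_r^{d-1}\,\partial P_r/\partial Y_{l-1,l}$. Your guess $-a(a+2r-2)$ already fails at $r=1$, where $\widetilde{Z}'_{l,l-1}(Y_{l-1,l}^a)=\wX'_{l-1,l-1}(aY_{l-1,l}^{a-1})=a(a-1)Y_{l-1,l}^{a-1}$. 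Moreover, your condition $a_r=1-2r-2C$ does not follow even from your own formula.

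The cross terms also do not vanish. For $r<r'$ the two mixed terms each equal $d_rd_{r'}P_r^{d_r-1}P_{r'}^{d_{r'}}\,\partial P_r/\partial Y_{l-1,l}$. Using irreducibility of the Pfaffians, one gets for every $r$ the condition $d_r\bigl(d_r-2r+1-2C+2\sum_{r'>r}d_{r'}\bigr)=0$. Mixed products are excluded only by subtracting these equations for two indices $r_1<r_2$ with $d_{r_1}d_{r_2}\neq0$, which yields a strictly positive quantity. None of this is in your plan. Separately, your Step 3 relies on ``every submodule is generated by its singular vectors''. That is not automatic in category $\mathcal{O}$, and the paper does not argue it either.

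\textbf{The sign is not a convention you can adjust to fit the statement.} $C$ is fixed intrinsically by $X_{i,i}\cdot 1=C$. For $r=1$ no Pfaffian calculus is needed. The triple $(Y_{l-1,l},\,X_{l-1,l-1}+X_{l,l},\,Z_{l,l-1})$ is an $\mathfrak{sl}_2$-triple, and $1$ is a lowest weight vector of weight $2C$. Hence $Z_{l,l-1}\cdot Y_{l-1,l}^a=-a(a-1+2C)\,Y_{l-1,l}^{a-1}$, so the singular exponent is $1-2C$, not $1+2C$. The discrepancy you noticed is therefore real. It traces to the last equality in the proof of Lemma \ref{lem41}: the second-order operator in the line just before it equals $-\widetilde{Z}'_{l,l-1}$, not $+\widetilde{Z}'_{l,l-1}$.

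With the correct sign, the paper's computation gives $d_r\bigl(d_r-2r+1+2C+2\sum_{r'>r}d_{r'}\bigr)=0$. The singular vectors are then $P_r^{2r-1-2C}$, and the criterion becomes the printed one with $C$ replaced by $-C$. A soft argument forces the same correction. For $2C\in-\mathbb{N}$ the lowest weight $2C\Lambda_l$ is antidominant integral, so the module has a finite-dimensional quotient and is not simple. Yet for such $C$ with $C<1-\lfloor l/2\rfloor$, the printed criterion declares it simple. So a Step 2 tuned to reproduce the printed exponent $2r-1+2C$ would be proving a false statement. The honest computation is indispensable, and it yields the sign-flipped criterion.
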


\begin{proof}
We can only consider $\Psi\in \mathbb{C}$, in which case $M_{\mathfrak{n}_l}(C,\Psi)$ is a lowest weight module with lowest weight vector $1$. Suppose that $f$ is a singular vector of $M_{\mathfrak{n}_l}(C,\Psi)$, then $X_{i,j}(f)= 0$ implies that $\widetilde{X}'_{i,j}(f)= 0$, $1\le j< i\le l$.

For an even number $l$, note that
$$P_{\frac{l}{2}}= \sum_{k=2}^l (-1)^k Y_{1,k}\text{Pfaffian}(Y_{\hat{1},\hat{k}})= Y_{1,2}P_{\frac{l}{2}-1}+ \sum_{k=3}^l (-1)^k Y_{1,k}\text{Pfaffian}(Y_{\hat{1},\hat{k}}),$$
where $Y_{\hat{1},\hat{k}}$ denotes the matrix obtained from $(Y_{i,j})_{1\leq i,j\leq l}$ by removing the $1$st and $k$-th rows and columns. Let $\overline{Y}_{1,j}= Y_{1,j}, \overline{Y}_{2,j}= Y_{2,j}, 3\leq j\leq l$, then
$$f\in \mathbb{C}[Y_{i,j}, P_{\frac{l}{2}-1}^{-1}| 3\leq i< j\leq l][\overline{Y}_{1,3}, \ldots,\overline{Y}_{1,l}, \overline{Y}_{2,3}, \ldots, \overline{Y}_{2,l}, P_{\frac{l}{2}}].$$
For $3\le i\le l$,
$$\widetilde{X}'_{i,1}(f)= \sum_{j=3}^l Y_{i,j}\frac{\partial f}{\partial \overline{Y}_{1,j}}+ \frac{\partial f}{\partial P_{\frac{l}{2}}}\widetilde{X}'_{i,1}(P_{\frac{l}{2}})= \sum_{j=3}^l Y_{i,j}\frac{\partial f}{\partial \overline{Y}_{1,j}}= 0$$
implies that 
$$(Y_{ij})_{3\leq i,j\leq l}\cdot\bigtriangledown(f)= 0, \bigtriangledown= (\frac{\partial}{\partial \overline{Y}_{1,3}}, \ldots, \frac{\partial}{\partial \overline{Y}_{1,l}})^T,$$
and similarly, $\widetilde{X}'_{i,2}(f)= 0$ implies that
$$(Y_{ij})_{3\leq i,j\leq l}\cdot\bigtriangledown(f)= 0, \bigtriangledown= (\frac{\partial}{\partial \overline{Y}_{2,3}}, \ldots, \frac{\partial}{\partial \overline{Y}_{2,l}})^T,$$
which force $f\in \mathbb{C}[Y_{i,j}, P_{l/2-1}^{-1}| 3\leq i< j\leq l][P_{l/2}]$. By induction, it can be concluded that $f\in \mathbb{C}[P_1^{-1}, \dots, P_{l/2-1}^{-1}, P_1, \ldots, P_{\lfloor l/2\rfloor}]$, and therefore $f\in \mathbb{C}[P_1, \ldots, P_{\lfloor l/2\rfloor}]$.

For an odd number $l$, we have
$$f\in \mathbb{C}[Y_{i,j}, P_{\frac{l-1}{2}}^{-1}| 2\leq i< j\leq l][Y_{1,2}, \ldots,Y_{1,l}].$$
For $2\le i\le l$, $\widetilde{X}'_{i,1}(f)= 0$ implies that 
$$ (Y_{ij})_{2\leq i,j\leq l}\cdot\bigtriangledown(f)= 0, \bigtriangledown= (\frac{\partial}{\partial Y_{1,2}}, \ldots, \frac{\partial}{\partial Y_{1,l}})^T,$$
which forces $f\in \mathbb{C}[Y_{i,j}, P_{(l-1)/2}^{-1}| 2\leq i< j\leq l]$. It can be concluded that $f\in \mathbb{C}[P_1, \ldots, P_{\lfloor l/2\rfloor}]$.

The weights of monomials in $\mathbb{C}[P_1, \ldots, P_{\lfloor l/2\rfloor}]$ are different since the weight of $P_1^{d_1}\cdots P_{\lfloor l/2\rfloor}^{d_{\lfloor l/2\rfloor}}$ is
$$-\sum_{k=1}^{\lfloor l/2\rfloor}d_k\Lambda_{l-2k}+ (2C+2(d_1+ \cdots+ d_{\lfloor l/2\rfloor}))\Lambda_l,$$
where $\Lambda_1,  \ldots, \Lambda_l$ are the fundamental weights, and let $\Lambda_0$ be zero weight. It implies that
$$f= cP_1^{d_1}\cdots P_{\lfloor l/2\rfloor}^{d_{\lfloor l/2\rfloor}}, c\in \mathbb{C}.$$
For $1\leq r< r'\leq \lfloor l/2\rfloor$, we have
$$\begin{aligned}
\sum_{k=1}^{l-1} \widetilde{X}'_{k,l-1}(P_{r'}^{d_{r'}})\frac{\partial P_r^{d_r}}{\partial Y_{k,l}}&= \sum_{k=l-2r+1}^{l-1} \widetilde{X}'_{k,l-1}(P_{r'}^{d_{r'}})\frac{\partial P_r^{d_r}}{\partial Y_{k,l}}\\
&= \widetilde{X}'_{l-1,l-1}(P_{r'}^{d_{r'}})\frac{\partial P_r^{d_r}}{\partial Y_{l-1,l}}\\
&= d_{r'}d_rP_{r'}^{d_{r'}}P_r^{{d_r}-1}\frac{\partial P_r}{\partial Y_{l-1,l}},
\end{aligned}$$
and
$$\begin{aligned}
\sum_{k=1}^{l-1} \widetilde{X}'_{k,l-1}(P_r^{d_r})\frac{\partial P_{r'}^{d_{r'}}}{\partial Y_{k,l}}&= \sum_{k=1}^{l-1}\sum_{k'=1}^{l} Y_{k',k}\frac{\partial P_r^{d_r}}{\partial Y_{k',l-1}}\frac{\partial P_{r'}^{d_{r'}}}{\partial Y_{k,l}}\\
&= -\sum_{k'=l-2r+1}^{l}\frac{\partial P_r^{d_r}}{\partial Y_{k',l-1}}\widetilde{X}'_{k',l}(P_{r'}^{d_{r'}})\\
&= \widetilde{X}'_{l,l}(P_{r'}^{d_{r'}})\frac{\partial P_r^{d_r}}{\partial Y_{l-1,l}}\\
&= d_{r'}d_rP_{r'}^{d_{r'}}P_r^{{d_r}-1}\frac{\partial P_r}{\partial Y_{l-1,l}}.
\end{aligned}$$
Moreover,
$$\begin{aligned}
\widetilde{Z}'_{l,l-1}(P_r^{d_r})&= \sum_{k=1}^{l-1}\widetilde{X}'_{k,l-1}\frac{\partial}{\partial Y_{k,l}}(P_r^{d_r})= \sum_{k=l-2r+1}^{l-1}\widetilde{X}'_{k,l-1}\frac{\partial}{\partial Y_{k,l}}(P_r^{d_r})\\
&= \sum_{k=l-2r+1}^{l-1}\frac{\partial}{\partial Y_{k,l}}\widetilde{X}'_{k,l-1}(P_r^{d_r})- (2r-1)d_rP_r^{{d_r}-1}\frac{\partial P_r}{\partial Y_{l-1,l}}\\
&= \frac{\partial}{\partial Y_{l-1,l}}\widetilde{X}'_{l-1,l-1}(P_r^{d_r})- (2r-1)d_rP_r^{{d_r}-1}\frac{\partial P_r}{\partial Y_{l-1,l}}\\
&= d_r(d_r- 2r+ 1)P_r^{{d_r}-1}\frac{\partial P_r}{\partial Y_{l-1,l}}, 1\leq r\leq \lfloor l/2\rfloor.
\end{aligned}$$
Therefore $Z_{l,l-1}\cdot f=0$ implies that
$$\begin{aligned}
&(\widetilde{Z}'_{l,l-1}- 2C\frac{\partial}{\partial Y_{l-1,l}})(f)\\
=& c(\sum_{r=1}^{\lfloor l/2\rfloor} (\widetilde{Z}'_{l,l-1}- 2C\frac{\partial}{\partial Y_{l-1,l}})(P_r^{d_r})\frac{f}{P_r^{d_r}}+ \sum_{1\leq r\neq r'\leq \lfloor l/2\rfloor}\sum_{k=1}^{l-1} \widetilde{X}'_{k,l-1}(P_{r'}^{d_{r'}})\frac{\partial}{\partial Y_{k,l}}(P_r^{d_r})\frac{f}{P_r^{d_r}P_{r'}^{d_{r'}}})\\
=& c(\sum_{r=1}^{\lfloor l/2\rfloor} d_r(d_r- 2r+ 1- 2C)\frac{\partial P_r}{\partial Y_{l-1,l}}\frac{f}{P_r}+ 2\sum_{r=1}^{\lfloor l/2\rfloor} d_r\sum_{r< r'\leq \lfloor l/2\rfloor} d_{r'}\frac{\partial P_r}{\partial Y_{l-1,l}}\frac{f}{P_r})\\
=& c\sum_{r=1}^{\lfloor l/2\rfloor} d_r(d_r- 2r+ 1- 2C+ 2\sum_{r< r'\leq \lfloor l/2\rfloor} d_{r'})\frac{\partial P_r}{\partial Y_{l-1,l}}\frac{f}{P_r}= 0,
\end{aligned}$$
which forces that
$$d_r(d_r- 2r+ 1- 2C+ 2\sum_{r< r'\leq \lfloor l/2\rfloor} d_{r'})= 0, r= 1, \ldots, \lfloor l/2\rfloor,$$
since the Pfaffian is an irreducible polynomial. If $d_{r_1}\neq 0, d_{r_2}\neq 0, r_1< r_2$, then
$$\begin{aligned}
0&= (d_{r_1}- 2r_1+ 1- 2C+ 2\sum_{r_1< r'\leq \lfloor l/2\rfloor} d_{r'})-(d_{r_2}- 2r_2+ 1- 2C+ 2\sum_{r_2< r'\leq \lfloor l/2\rfloor} d_{r'})\\
&= d_{r_1}+ 2(r_2- r_1)+ d_{r_2}+ 2\sum_{r_1< r'< r_2} d_{r'}> 0,
\end{aligned}$$
which is a contradiction. So $f$ can only be in $\mathbb{C}^*$ or
$$c_rP_r^{2r- 1+ 2C}, c_r\in \mathbb{C}, 1\leq r\leq \lfloor l/2\rfloor, 2r- 1+ 2C\geq 1.$$
The conclusion is proven.

\end{proof}

{\bf Acknowledgement}. 
The second author is partially supported by the National Natural Science Foundation of China (No.12271085) and the CSC(No.202506620188).

\

\end{document}